\theoremstyle{plain}
\newtheorem{Pro}{\bf Proposition}
\newtheorem{Cor}[Pro]{\bf Corollary}
\newtheorem{Conj}[Pro]{\bf Conjecture}
\theoremstyle{definition}
\newtheorem{Def}{\bf Definition}
\newcommand{\cU}{\mathcal U}
\newcommand{\cC}{\mathcal C}
\newcommand{\cD}{\mathcal D}
\newcommand{\ccC}{\widetilde{\cC}}
\newcommand{\cM}{\mathcal M}
\newcommand{\cO}{\mathcal O}
\newcommand{\PG}{\operatorname{PG}}
\newenvironment{framed}{}{}
\newcommand{\begalg}{\begin{alg}[ht]\tt\begin{center}\begin{framed}%
        \begin{minipage}{10cm}}
        \newcommand{\enalg}{\end{minipage}\end{framed}\end{center}\end{alg}}
\title[Ovoids: a computational approach]{Looking for ovoids of the %
  Hermitian surface: a computational approach}
\author{Luca Giuzzi}
\address{Luca Giuzzi \\
  Dipartimento di Matematica \\
  Fa\-col\-t\`a di Ingegneria \\
  U\-ni\-ver\-si\-t\`a degli studi di Brescia \\
  Via Valotti 9 \\
  25133 Brescia (Italy)}
\email{giuzzi@dmf.unicatt.it}
\urladdr{http://www.dmf.unicatt.it/\textasciitilde giuzzi}
\keywords{Hermitian surface, caps, ovoids, algorithms}
\subjclass[2000]{Primary: 51E15  ; Secondary:  68W05, 14H50, 11E39}
\begin{document}
\begin{abstract}
  In this note we introduce a computational approach to the
  construction of ovoids of the Hermitian surface and
  present some related experimental results.  
\end{abstract}
\maketitle
\tableofcontents
\section*{Introduction}
Let $q$ be a prime power and denote by $\cU$ the non--degenerate
Hermitian surface of $\PG(3,q^2)$.
A {\em generator} of $\cU$ is a maximal linear subspace contained
in $\cU$ --- in the case of Hermitian surfaces, a generator is
 a line.
A {\em Hermitian cap} $\cC$  is a subset of $\cU$
which is met by any generator  in at most one point.
A Hermitian cap is a {\em Hermitian ovoid} if and only if it is
met by any generator of in exactly one point.
\par
A Hermitian cap $\cC$  is usually not a cap  of the space $\PG(3,q^2)$.
In this paper, `caps' and `ovoids'
will always be assumed to mean Hermitian caps and Hermitian ovoids.
\par
An example of ovoid is provided by the intersection of the
Hermitian surface $\cU$ with any non--tangent plane;
however, several different constructions are 
known which lead to projectively inequivalent Hermitian ovoids,
see for instance \cite{BW}, \cite{Th2}, \cite{PT}.
\par
A  Hermitian cap which is maximal with 
respect to inclusion is said to be {\em complete}.
Hermitian ovoids are clearly complete; yet, there exist also complete
caps which are not ovoids.
\par
As a matter of fact, see \cite{ob}, it is known that if $\ccC$ is
 a complete cap,
then
\[ q^2+1\leq|\ccC|\leq q^3+1, \]
and both bounds are sharp: in particular,
$\ccC$ is an ovoid if and only if $|\ccC|=q^3+1$.
\par
Any maximal curve $\cD$ embedded in $\cU$ and different from the
 Hermitian curve, see \cite{KT},
provides an example of a cap of size approximately
$\frac{1}{4}(q^3-q^2)$.
For $q$ even, these caps are always complete;
when $q$ is odd, this is not the 
case: in fact, the set of the points of $\cD$ is usually contained in
an ovoid.
\par
The original motivation for this work has been to construct
some tools in order to help with the investigation of the relationship between
these partial caps and their completions.
However,
it is currently an open problem to determine
the spectrum of cardinalities of complete caps of
the Hermitian surface.
Numerical evidence suggests that, at least for $q$ prime, there should exist
complete caps of cardinality $t$ for almost all
values $q^2+1\leq t\leq q^3-q+1$.
It appears also that  complete caps are not
evenly distributed within this range.
\par
In Section 1, we introduce a strategy to look for complete caps
of the Hermitian surface; in Section 2, some improvements on the
basic algorithm are suggested;
in Section 3, we provide
the results of our computations for the cases $q=5$ and $q=7$;
these result lead us to 
conjecture that the size of the second largest complete cap 
is $q^3-q+1$.

\section{Basic completion strategy}
A {\em generator} of the surface $\cU$
 is a line of $\PG(3,q^2)$ completely included in $\cU$.
For any $x\in\cU$, denote by $Gx$ the set
of all generators of $\cU$ passing through $x$.
If we write by $T_x\cU$ the tangent plane at $x$ to $\cU$,
then the set $Gx$ may be determined as
\[ Gx=T_x\cU\cap\cU. \]
A point $p\in\cU$ is 
{\em covered} by
a set $\cM\subseteq\cU$ whenever 
\[ PP\cap\cM\neq\emptyset. \]
The set of points being covered by $\cM$ is written
as $G\cM$. It is straightforward to show that
\[ G\cM=\bigcup_{x\in\cM}Gx. \]
\par
\begin{Pro}
  \label{search}
  Let  $\cC$ be a cap of $\cU$; take $x\in\cU\setminus\cC$.
  Then, the set
  $\widetilde{\cC}=\cC\cup\{x\}$ is a cap of $\cU$ if
  and only if $x\not\in G\cC$.
\end{Pro}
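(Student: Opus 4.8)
The plan is to establish both implications simultaneously by pinning down exactly which generators can meet $\widetilde{\cC}=\cC\cup\{x\}$ in more than one point, and then translating that condition into the covering relation $x\in G\cC$.

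First I would exploit the hypothesis that $\cC$ is already a cap, so that every generator of $\cU$ meets $\cC$ in at most one point. Consequently the only way $\widetilde{\cC}$ can fail to be a cap is if some generator $\ell$ carries two distinct points of $\widetilde{\cC}$; such an $\ell$ cannot contain two points of $\cC$, hence it must pass through the adjoined point $x$ together with exactly one point $y\in\cC$. Here I would invoke $x\in\cU\setminus\cC$ to guarantee $x\neq y$, so that $\ell$ genuinely meets $\widetilde{\cC}$ in two points. This reduces the entire statement to deciding whether there exists a generator through $x$ that also meets $\cC$.

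Next I would match this up with the definition of $G\cC$. Using the identity $G\cC=\bigcup_{y\in\cC}Gy$ recalled just above the statement, the point $x$ belongs to $G\cC$ precisely when $x$ lies on a generator through some $y\in\cC$, that is, when some generator meets both $\{x\}$ and $\cC$. Chaining the two steps yields the equivalences: $\widetilde{\cC}$ fails to be a cap $\iff$ some generator through $x$ meets $\cC$ $\iff$ $x\in G\cC$; taking the contrapositive of the outer equivalence gives the asserted statement.

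There is no deep obstacle, since the argument is essentially bookkeeping with the cap condition; the single point that genuinely requires care is the distinctness of the two intersection points, where the hypothesis $x\notin\cC$ must be used. Without it, a generator through $x$ meeting $\cC$ only at $x$ itself would not violate the cap property, and the stated equivalence would break down, so I would be careful to place that hypothesis exactly at the step guaranteeing $x\neq y$.
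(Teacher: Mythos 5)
Your proposal is correct and takes essentially the same approach as the paper: both arguments reduce the statement to whether some generator through $x$ meets $\cC$, using the cap property of $\cC$ to rule out two points of $\cC$ on a generator and the hypothesis $x\notin\cC$ to guarantee the two intersection points are distinct. The only difference is organisational --- you phrase it as a single chain of equivalences in contrapositive form, whereas the paper proves the two implications separately (with an explicit case split on $x\in L$ versus $x\notin L$ in the backward direction) --- but the mathematical content is identical.
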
 
\begin{proof}
  If $x\in G\cC$, then there exists a generator $L$ of
  $\cU$ such that $x\in L$ and $L\cap\cC\neq\emptyset$.
  Since $x\not\in\cC$, it follows that 
  \[ |L\cap\widetilde{\cC}|=2; \]
  hence, in this case, $\widetilde{\cC}$ is not a cap.
  \par
  Assume now $x$ not to be covered by $\cC$
  and  let $L$ be any generator of $\cU$.
  If $x\in L$, then $L\cap\cC=\emptyset$; hence,
  $|L\cap\widetilde{\cC}|=1$.
  On the other hand, if $x\not\in L$, then
  \[ L\cap\widetilde{\cC}=L\cap\cC, \]
  which yields $|L\cap\tilde{\cC}|\leq 1$.
  It follows that any generator $L$ of $\cU$
  meets $\widetilde{\cC}$ in at most one point ---
  that is, $\widetilde{\cC}$ is a cap.
\end{proof}

For any given cap $\cC$,
Algorithm \ref{alg1} provides
a complete cap $\widetilde{\cC}$ with
$\cC\subseteq\widetilde{\cC}$.

\begalg
\caption{Basic completion algorithm}
\label{alg1}
\begin{tabular}{ll}
  {\bf Input:} & a cap $\cC$; \\
  {\bf Output:}& a complete cap $\widetilde{\cC}$.
\end{tabular}
\vskip.2cm
Complete($\cC$):=
\begin{enumerate}
\item Compute the set $\cM$ of points of $\cU$ not covered by $\cC$;
\item If $\cM=\emptyset$, return $\cC$ and exit;
\item Pick a random element $x\in\cM$;
\item $\cC\leftarrow(\cC\cup\{x\})$;
\item If $|\cC|=q^3+1$, return $\cC$ and exit;
\item Compute the set $\cM'=(\cM\setminus Gx)$;
\item $\cM\leftarrow\cM'$;
\item Go back to step (2).
\end{enumerate}
\enalg

This algorithm is guaranteed to complete in {\em at most}
$q^3+1-|\cC|$ iterations.

An efficient way to implement step {\tt (6)} is to compute
$\cM'$ as the set of points of $\cM$ which are not conjugate
to $x$ according to the unitary polarity induced by $\cU$.

\section{Large and small completions}
For any partial cap $\cC$,
Algorithm \ref{alg1} determines a complete cap $\widetilde{\cC}$
with $\cC\subseteq\widetilde{\cC}$.
However, a small cap $\cC$ usually admits several different completions,
as it can be seen from the tables of Section \ref{sec:31}. In fact,
even completions with the same cardinality needs not be
projectively equivalent, as it can be seen in the case of ovoids.
\begin{Def}
  A completion $\widetilde{\cC}$ of $\cC$ is {\em minimum} if, for any
  complete cap $\cD$ such that $\cC\subseteq\cD$,
  \[ |\widetilde{\cC}|\leq|\cD|; \]
  a completion $\widetilde{\cC}$ is {\em maximum} if
  \[ |\widetilde{\cC}|\geq|\cD|, \]
  for any complete cap $\cD$ with $\cC\subseteq\cD$.
  We call a completion $\ccC$ {\em optimal} if
  $\ccC$ it is either maximum or minimum.
\end{Def}
If there is a completion $\ccC$ of $\cC$ such
that 
\[ |\ccC|\leq|\cC|+1, \]
then, clearly, $\ccC$ is a {\em minimum} completion of $\cC$.
Likewise, if there is an ovoid $\cO$ containing $\cC$, then again $\cO$ is
a {\em maximum} completion of $\cC$.

To determine the size of the optimal completions of a given partial
cap is, in general, non--trivial.

In this section, we introduce some refinements to Algorithm
\ref{alg1} in order to bias the
construction toward obtaining `large' or `small' caps
containing a prescribed set $\cC$.
\begin{Def}
  Let $\cC$ be a non--empty cap;
  for any $x\in\cU$, the {\em relevance} of $x$
  with respect to $\cC$ is
  \[ r(x,\cC):=|Gx\cup G\cC|-|G\cC|. \]
\end{Def}
Clearly, if $x\in\cC$, then $r(x,\cC)=0$. Hence, when $x\in\cC$,
we shall usually speak of the number
\[ r(x,\cC\setminus\{x\}) \]
as the {\em relevance of $x$ in $\cC$}.
\par
A  dual notion to relevance is that of {\em 
  coverage}.
\begin{Def}
  For any $y\in\cU$, the {\em coverage} of
  $y$ by $\cC$ is the number $c(y,\cC)$ of points
  in
  $x\in\cC$ such that $y\in T_x\cU$.
\end{Def}

The most efficient way to determine
$c(x,\cC)$ is as the cardinality of the set of points of
$\cC$ which are conjugate to $x$.
From
\[ |Gx\cup G\cC|=|Gx|+|G\cC|-|Gx\cap G\cC|, \]
it follows that
\[ r(x,\cC)+c(x,\cC)=|Gx|=q^3+q^2+1. \]
Hence, $r(x,\cC)$ might be computed directly
from $c(x,\cC)$.
\par
\begin{Def} 
  The {\em  weight} of the point $x\in\cC$
  in $\cC$
  is the number
  \[ w(x,\cC):=\sum_{y\in Gx}\frac{1}{c(y,\cC)} \]
\end{Def}
The assumption $x\in\cC$ guarantees
that $c(y,\cC)\neq 0$. 
\par
For any $x\in\cC$, define
\[ \cC_x:=\cC\setminus\{x\}. \]
Then, for any $y\in\cU$, the relevance of $y$ with
 respect to $\cC_x$ may
 be written as
\[ r(y,\cC_x)=r(y,\cC)+|(Gx\cap Gy)\setminus\cC_x|. \]
\begin{Pro}
  \label{pr2}
  Let $x\in\cC$ and assume $y\not\in G\cC$.
  Then,
  \[ |G\cC_x|=|G\cC|-r(x,\cC_x), \]
  and 
  \[ |G(\cC\cup\{y\})|=|G\cC|+r(y,\cC). \]
  Furthermore, $\cC$ is complete if and only if
  $|G\cC|=(q^3+1)(q^2+1)$.
\end{Pro}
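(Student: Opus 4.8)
The plan is to get both cardinality identities directly from the definition of relevance together with the additivity of coverage, and then to read off completeness from Proposition~\ref{search}.

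The only structural fact I need is that $G$ distributes over unions: since $G\cM=\bigcup_{z\in\cM}Gz$, we have $G(\cC\cup\{y\})=G\cC\cup Gy$. The second identity is then immediate from the definition of relevance,
\[ |G(\cC\cup\{y\})|=|G\cC\cup Gy|=|G\cC|+\bigl(|Gy\cup G\cC|-|G\cC|\bigr)=|G\cC|+r(y,\cC); \]
observe that the hypothesis $y\not\in G\cC$ plays no role here and serves only to guarantee, through Proposition~\ref{search}, that $\cC\cup\{y\}$ is again a cap. For the first identity I would transpose the roles of the two points. Writing $\cC=\cC_x\cup\{x\}$, which is legitimate because $x\in\cC$, and applying the identity just obtained with $\cC_x$ in place of $\cC$ and $x$ in place of $y$, I get $|G\cC|=|G\cC_x|+r(x,\cC_x)$, which rearranges to the claimed $|G\cC_x|=|G\cC|-r(x,\cC_x)$.

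For the completeness statement the key preliminary observation is that every point covers itself: since $x\in T_x\cU\cap\cU=Gx$ we have $\cC\subseteq G\cC$, while $G\cC\subseteq\cU$ because every generator lies on $\cU$. By Proposition~\ref{search}, $\cC$ can be properly enlarged to a cap exactly when some point of $\cU\setminus\cC$ fails to lie in $G\cC$; hence $\cC$ is complete if and only if $\cU\setminus\cC\subseteq G\cC$. Combined with $\cC\subseteq G\cC$ this collapses to $G\cC=\cU$, and since $G\cC\subseteq\cU$ it is in turn equivalent to $|G\cC|=|\cU|$. The argument concludes by recalling that the non--degenerate Hermitian surface of $\PG(3,q^2)$ has exactly $(q^3+1)(q^2+1)$ points.

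The main obstacle is minimal: the two counting identities are forced by the definitions once additivity of $G$ is noted, and the only genuinely external ingredient is the point count $|\cU|=(q^3+1)(q^2+1)$. The one step I would be careful not to skip is the verification that $\cC\subseteq G\cC$, since the reduction of ``every point outside $\cC$ is covered'' to ``$G\cC=\cU$'' relies on it.
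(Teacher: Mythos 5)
Your proof is correct, and it matches the intended argument: the paper states Proposition~\ref{pr2} without proof, treating it as immediate from the definition of relevance together with $G(\cC\cup\{y\})=G\cC\cup Gy$, exactly as you argue, with the first identity obtained from the second by writing $\cC=\cC_x\cup\{x\}$. Your completeness argument via Proposition~\ref{search} is also the intended one, and you are right to flag the small but necessary observation $\cC\subseteq G\cC$ (each $x$ lies in $Gx=T_x\cU\cap\cU$) together with the point count $|\cU|=(q^3+1)(q^2+1)$, neither of which the paper makes explicit.
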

The weight of a point $x\in\cC$ and its coverage
by $\cC_x$ are clearly related.

\begin{Pro}
  \label{relat}
  For any $x\in\cC$,
  \[ w(x,\cC)=r(x,\cC_x)+\sum_{y\in Gx\cap G\cC_x}\frac{1}{c(y,\cC_x)+1}. \]
\end{Pro}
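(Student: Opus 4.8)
The plan is to rewrite the defining sum for $w(x,\cC)$ by tracking how the coverage function changes when the single point $x$ is deleted from $\cC$. The key observation I would establish first is the identity
\[ c(y,\cC)=c(y,\cC_x)+1\qquad\text{for every }y\in Gx. \]
This rests on the symmetry of the unitary polarity: since $c(y,\cC)$ counts the points $z\in\cC$ conjugate to $y$, and $y\in Gx=T_x\cU\cap\cU$ means precisely that $y$ (hence $x$) is self-conjugate across the pair, the point $x$ is one of the points of $\cC$ conjugate to $y$. As $x\in\cC$, it contributes $1$ to $c(y,\cC)$ but nothing to $c(y,\cC_x)$, while every other contribution is unaffected.

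Substituting this identity into the definition of weight gives
\[ w(x,\cC)=\sum_{y\in Gx}\frac{1}{c(y,\cC)}=\sum_{y\in Gx}\frac{1}{c(y,\cC_x)+1}. \]
Next I would partition the index set $Gx$ into the points covered by $\cC_x$ and those not covered, that is, into $Gx\cap G\cC_x$ and $Gx\setminus G\cC_x$. For $y\in Gx\setminus G\cC_x$ one has $c(y,\cC_x)=0$, so each such term equals $1$, and their total is $|Gx\setminus G\cC_x|$. The terms indexed by $Gx\cap G\cC_x$ are left untouched and give exactly the second summand $\sum_{y\in Gx\cap G\cC_x}\tfrac{1}{c(y,\cC_x)+1}$ of the claimed formula.

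It then remains only to identify $|Gx\setminus G\cC_x|$ with the relevance $r(x,\cC_x)$. This is immediate from the definition of relevance combined with the elementary set identity $|A\cup B|-|B|=|A\setminus B|$, taken with $A=Gx$ and $B=G\cC_x$:
\[ r(x,\cC_x)=|Gx\cup G\cC_x|-|G\cC_x|=|Gx\setminus G\cC_x|. \]
Combining the two pieces yields the stated equality. I expect no serious obstacle here; the only step demanding care is the coverage shift $c(y,\cC)=c(y,\cC_x)+1$, which silently invokes both the symmetry of conjugacy and the hypothesis $x\in\cC$ — the latter being also what guarantees that all denominators appearing are nonzero. Everything else is bookkeeping.
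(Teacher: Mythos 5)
Your proposal is correct and follows essentially the same route as the paper: the shift identity $c(y,\cC)=c(y,\cC_x)+1$ for $y\in Gx$, the partition of $Gx$ into $Gx\cap G\cC_x$ and $Gx\setminus G\cC_x$, and the identification $|Gx\setminus G\cC_x|=|Gx\cup G\cC_x|-|G\cC_x|=r(x,\cC_x)$. The only cosmetic difference is that you invoke the symmetry of the unitary polarity to justify the shift, whereas under the paper's definition of coverage ($y\in T_x\cU$ with $x\in\cC$) the membership $y\in Gx=T_x\cU\cap\cU$ gives it directly.
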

\begin{proof}
  If $y\in Gx$, then
  \[ c(y,\cC)=c(y,\cC_x)+1. \]
  For  $y\in Gx\setminus G\cC_x$, the coverage of $y$ by $\cC_x$ is
  $c(y,\cC_x)=0$; hence, $c(y,\cC)=1$. It follows that
  \begin{eqnarray*}
    \displaystyle
    \sum_{y\in Gx\setminus G\cC_x}\frac{1}{c(y,\cC)}&=%
    \displaystyle 
    \sum_{y\in Gx\setminus G\cC_x}1= |Gx\setminus G\cC_x|=\\[.4cm]
    &=|Gx\cup G\cC_x|-|G\cC_x|=r(x,\cC_x).
  \end{eqnarray*}
  This implies
  \begin{eqnarray*}
    w(x,\cC)&=\displaystyle\sum_{y\in Gx\setminus G\cC_x}\frac{1}{c(y,\cC)}+
    \sum_{y\in Gx\cap G\cC_x}\frac{1}{c(y,\cC)}= \\
    &=\displaystyle
    r(x,\cC_x)+\sum_{y\in Gx\cap G\cC_x}\frac{1}{c(y,\cC_x)+1},
  \end{eqnarray*}
  and the result follows.
\end{proof}
A straightforward argument now proves that
\[ r(x,\cC)\geq 2w(x,\cC) - (q^3+q^2+1). \]
\begin{Pro}
  \label{abwh}
  For any complete cap ${\cC}$,
  \[ \sum_{x\in{\cC}}w(x,{\cC})=(q^3+1)(q^2+1). \]
\end{Pro}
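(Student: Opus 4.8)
The plan is to evaluate the total weight $\sum_{x\in\cC}w(x,\cC)$ by unfolding the definition of the weight and interchanging the order of summation, so that the whole quantity collapses to an incidence count on $\cU$. First I would write
\[ \sum_{x\in\cC}w(x,\cC)=\sum_{x\in\cC}\sum_{y\in Gx}\frac{1}{c(y,\cC)}, \]
which is a sum over all pairs $(x,y)$ with $x\in\cC$ and $y\in Gx$. The crucial observation is that the incidence ``$y\in Gx$'' is \emph{symmetric} on $\cU$: since $Gx=T_x\cU\cap\cU$ and the relation of being conjugate under the unitary polarity induced by $\cU$ is symmetric, for points $x,y\in\cU$ one has $y\in Gx$ if and only if $x\in Gy$. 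Consequently, fixing $y$ and counting the $x\in\cC$ with $y\in Gx$ gives exactly $c(y,\cC)$, directly from the definition of coverage.

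Next I would interchange the two summations, regrouping the double sum according to $y$:
\[ \sum_{x\in\cC}\sum_{y\in Gx}\frac{1}{c(y,\cC)}=\sum_{y\in G\cC}\frac{|\{x\in\cC:y\in Gx\}|}{c(y,\cC)}=\sum_{y\in G\cC}\frac{c(y,\cC)}{c(y,\cC)}=\sum_{y\in G\cC}1=|G\cC|, \]
where the index $y$ now runs over $G\cC=\bigcup_{x\in\cC}Gx$, the set of points covered by $\cC$.

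Finally I would invoke completeness. By Proposition \ref{pr2}, the cap $\cC$ is complete precisely when $|G\cC|=(q^3+1)(q^2+1)$, that is when $G\cC=\cU$; in particular every $y$ in the summation range then satisfies $c(y,\cC)\geq 1$, so that no denominator vanishes and the cancellation carried out above is legitimate. Substituting $|G\cC|=(q^3+1)(q^2+1)$ immediately yields the asserted identity.

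I expect the main obstacle to be the careful justification of the symmetric incidence $y\in Gx\iff x\in Gy$ together with the identification of the fibre cardinality $|\{x\in\cC:y\in Gx\}|$ with $c(y,\cC)$; once this is established, and once completeness is used both to fix the summation range as all of $\cU$ and to guarantee $c(y,\cC)\neq 0$, the remainder is a routine interchange of summation followed by a clean cancellation.
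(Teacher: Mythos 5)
Your proposal is correct and takes essentially the same route as the paper: both unfold the total weight into the double sum $\sum_{x\in\cC}\sum_{y\in Gx}\frac{1}{c(y,\cC)}$, interchange the order of summation so that each covered point $y$ contributes $\frac{c(y,\cC)}{c(y,\cC)}=1$, and use completeness to identify the summation range with all of $\cU$, of cardinality $(q^3+1)(q^2+1)$. Your appeal to the symmetry of conjugacy under the unitary polarity is sound but not even necessary, since for $y\in\cU$ the condition $y\in Gx$ is by definition $y\in T_x\cU$, which is exactly what $c(y,\cC)$ counts.
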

\begin{proof}
  Since $\cC$ is complete, the union of all $Gx$, as
  $x$ varies in $\cC$, is $\cU$. Hence, $|G\cC|$ 
  might
  be written as
  \[
  |G\cC|=\sum_{x\in\cC}\sum_{y\in Gx}\frac{1}{c(y,\cC)}= 
  \sum_{y\in\cU}\frac{c(y,\cC)}{c(y,\cC)}= \\
  \sum_{y\in\cU}1=|\cU|.
  \]
  The proposition follows.
\end{proof}

\begin{Pro}
  \label{abwhc}
  Let $\cC$ be a complete cap of cardinality $q^2+1$. Then,
  there exists $x\in\cC$ such that
  \[  w(x,\cC)\geq q^3+1; \]
  likewise, if $\cC$ is an ovoid, 
  then there is $x\in\cC$ such that
  \[ w(x,\cC)\leq q^2+1. \]
\end{Pro}

Proposition \ref{abwhc} can be proved as an immediate corollary of
Proposition \ref{abwh}. It suggests that if a cap is large,
 then its points might be expected to have small weight and that,
 conversely, the weight of points of a large cap is usually
 fairly large.

\begin{Pro}
  \label{pro2}
  Let $\cC$ be a non--empty cap;
  then, for any $x\in\cU$ not covered by $\cC$,
  \[ 1\leq r(x,\cC)\leq q(q^2+q-1). \]
  Furthermore, 
  if there is $x\in\cU$ such that $r(x,\cC)=1$,
  then $|\cC|\geq q^2$.
\end{Pro}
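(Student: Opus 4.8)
The plan is to recast relevance as a counting quantity inside $Gx$ and then read off both inequalities from the value $|Gx|=q^3+q^2+1$. By inclusion--exclusion, $r(x,\cC)=|Gx\cup G\cC|-|G\cC|=|Gx|-|Gx\cap G\cC|$, so $r(x,\cC)$ is exactly the number $|Gx\setminus G\cC|$ of points of $Gx$ that $\cC$ fails to cover. For the lower bound I would note that $x$ itself lies on every generator through $x$, so $x\in Gx$; since $x$ is assumed uncovered, $x\notin G\cC$, and therefore $x\in Gx\setminus G\cC$ gives $r(x,\cC)\geq 1$.

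For the upper bound the idea is that even a single point of $\cC$ forces at least $q+1$ points of $Gx$ to be covered. Fix any $p\in\cC$ (the cap is non-empty). Since $x$ is uncovered, no point of $\cC$ lies in $Gx$ (otherwise $x$ would lie on a generator through a point of $\cC$), so in particular $p\notin Gx$. Now examine the $q+1$ generators of $\cU$ through $p$. None of them can be contained in the tangent plane $T_x\cU$: a generator lying in $T_x\cU\cap\cU$ must be one of the generators through $x$, which would force $p\in Gx$. Hence each generator $L$ through $p$ meets the plane $T_x\cU$ in a single point $y_L$, and $y_L\in L\cap\cU\subseteq T_x\cU\cap\cU=Gx$, while $y_L\in G\cC$ because $y_L$ lies on a generator through $p\in\cC$. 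The points $y_L$ are pairwise distinct, since two distinct generators through $p$ meet only in $p$, whereas $p\notin Gx$. This yields $|Gx\cap G\cC|\geq|Gx\cap Gp|=q+1$, and consequently $r(x,\cC)=|Gx|-|Gx\cap G\cC|\leq q^3+q^2+1-(q+1)=q(q^2+q-1)$.

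For the final assertion, suppose $r(x,\cC)=1$. By the first paragraph $x$ is then the unique uncovered point of $Gx$, so the remaining $|Gx|-1=q^3+q^2$ points of $Gx$ all lie in $G\cC$. Because $x$ is uncovered, every $p\in\cC$ satisfies $p\notin Gx$, and the analysis above in fact gives the exact count $|Gp\cap Gx|=q+1$, since every point of $Gp\cap Gx$ is forced to be one of the $y_L$. Counting the covered points of $Gx$ over the points of $\cC$ and using subadditivity of the union, $q^3+q^2=|G\cC\cap Gx|\leq\sum_{p\in\cC}|Gp\cap Gx|=(q+1)\,|\cC|$, whence $|\cC|\geq(q^3+q^2)/(q+1)=q^2$.

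The main obstacle is the geometric count $|Gp\cap Gx|=q+1$ for $p\notin Gx$: I must verify both that no generator through $p$ is absorbed into the tangent plane $T_x\cU$ and that the resulting intersection points $y_L$ are genuinely distinct, so that the inequality used for the upper bound becomes an equality for the cardinality argument. Once this local incidence count is in place, the rest is routine bookkeeping with the identity $r(x,\cC)+|Gx\cap G\cC|=|Gx|$ and the value $|Gx|=q^3+q^2+1$ recorded earlier.
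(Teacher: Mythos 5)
Your proof is correct, and while the upper bound travels essentially parallel to the paper, your treatment of the final assertion is genuinely different. For the bound $r(x,\cC)\leq q(q^2+q-1)$, the paper first invokes the monotonicity $r(x,\cC)\geq r(x,\cC')$ for $\cC\subseteq\cC'$ to reduce to $|\cC|=1$, and then computes $|Gx\cap Gy|=q+1$ by intersecting with the polar line $T_{xy}\cU=T_x\cU\cap T_y\cU$, which meets $\cU$ in $q+1$ points; you instead fix a single $p\in\cC$ and produce the same $q+1$ points as the traces $y_L$ of the $q+1$ generators through $p$ on the plane $T_x\cU$. These are two counts of the same set --- every generator through $p$ lies in $T_p\cU$, so your points $y_L$ are exactly $T_x\cU\cap T_p\cU\cap\cU$ --- and your version has the minor advantage of dispensing with the monotonicity reduction, since $|Gx\cap G\cC|\geq|Gx\cap Gp|$ does the same work. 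The real divergence is in proving $|\cC|\geq q^2$ when $r(x,\cC)=1$: the paper argues locally on one generator $L$ through $x$, noting that the $q^2$ points of $L\setminus\{x\}$ must be covered by pairwise \emph{distinct} elements of $\cC$, because two points of $L$ covered by the same $y$ would force $L\subseteq T_y\cU$ and hence $x\in Gy$; this exhibits $q^2$ points of $\cC$ directly and needs no exact incidence count. You instead double count globally: all $q^3+q^2$ covered points of $Gx$ are charged to points of $\cC$ at the exact rate $|Gp\cap Gx|=q+1$, and the bound $|\cC|\geq(q^3+q^2)/(q+1)=q^2$ falls out by averaging. Your route requires verifying the exactness of the local count (which you correctly identify as the crux and do establish, via the uniqueness of each $y_L$ and the fact that $Gp$ is the union of the generators through $p$), whereas the paper's single-generator argument is more elementary; in exchange, your computation makes explicit the sharper structural fact that each point of $\cC$ accounts for exactly $q+1$ points of $Gx$, the same $(q+1)$-fold covering phenomenon that the paper exploits later in Propositions \ref{ov1le} and \ref{ov2le}.
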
 
\begin{proof}
  Clearly, for $\cC\subseteq\cC'$, 
  \[ r(x,\cC)\geq r(x,\cC'). \]
  Hence, in
  order to prove the upper bound on 
  $r(x,\cC)$, it is enough to consider the
  case when $|\cC|=1$.
  Assume $x,y$ be two distinct points of $\cU$ and suppose 
  that $x$ is not covered by $y$. Then, $x\not\in T_y\cU$ and
  neither $x$ nor $y$ are on
  the line
  \[ T_{xy}\cU=T_x\cU\cap T_y\cU. \]
  Furthermore,
  $T_{xy}\cU$ meets $\cU$ in $q+1$ points and
  \[ Gx\cap Gy=Gx\cap T_{xy}\cU. \]
  Hence, 
  \[ |Gx\cap Gy|=q+1. \]
  It follows that
  \[ r(x,\{y\})=q(q^2+q-1). \]
  The lower bound on $r(x,\cC)$ is immediate.
  \par
  Suppose now $r(x,\cC)=1$, and consider a component $L$ of
  $\cU$ which is in $Gx$.
  All points of $L$ but $x$ are covered by some point of $y\in\cC$.
  Hence,
  \[ \forall t\in L\setminus\{x\}, \exists y\in\cC: t\in T_y\cU\cap T_x\cU.\]
  \par
  Furthermore, if two points $t,t'$ of $L$ were covered by the same $y\in\cU$,
  then $tt'=L\subseteq T_y\cU$ and $x$ would also by covered by $y$ --- a
  contradiction, since $r(x,\cC)=1$. This implies that $\cC$ contains at 
  least $q^2$ points.
\end{proof}
\begin{Pro}
  \label{tech-lemma}
  The second largest value for $r(x,\cC)$ is
  $q^3+q^2-2q$.
\end{Pro}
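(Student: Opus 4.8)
The plan is to reduce everything to the planar geometry of the tangent plane $T_x\cU$. Throughout I would work, as in Proposition \ref{pro2}, with a point $x$ that is not covered by $\cC$, so that
\[ r(x,\cC)=|Gx|-|Gx\cap G\cC|=(q^3+q^2+1)-|Gx\cap G\cC|; \]
maximising $r$ is then the same as minimising the overlap $|Gx\cap G\cC|$. For each $y\in\cC$ the computation already carried out in the proof of Proposition \ref{pro2} gives $Gx\cap Gy=T_{xy}\cU\cap\cU$, a set of exactly $q+1$ points lying on the line $T_{xy}\cU=T_x\cU\cap T_y\cU$, which is contained in the plane $T_x\cU$. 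Hence the overlap is a union of $(q+1)$-point traces, each carried by a line of the projective plane $T_x\cU$:
\[ |Gx\cap G\cC|=\Bigl|\,\bigcup_{y\in\cC}\bigl(T_{xy}\cU\cap\cU\bigr)\Bigr|. \]

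First I would establish the crucial gap. Two of the lines $T_{xy}\cU$ coincide exactly when the corresponding points of $\cC$ lie with $x$ on a common chord; if \emph{all} of them coincide we are in the extremal situation of Proposition \ref{pro2}, where the overlap equals $q+1$ and $r(x,\cC)=q^3+q^2-q$. Otherwise at least two of these lines are distinct, and since distinct lines of the plane $T_x\cU$ meet in a single point, the union of their two $(q+1)$-point traces on $\cU$ has at least $(q+1)+(q+1)-1=2q+1$ points; adjoining the traces coming from the remaining points of $\cC$ can only enlarge this union. Therefore as soon as the overlap exceeds $q+1$ it is already at least $2q+1$, giving
\[ r(x,\cC)\leq (q^3+q^2+1)-(2q+1)=q^3+q^2-2q, \]
so that no value strictly between $q^3+q^2-2q$ and the maximum $q^3+q^2-q$ can occur.

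It then remains to exhibit a configuration attaining the bound. I would fix a point $z\in\cU$ and work inside the tangent plane $T_z\cU$, whose intersection with $\cU$ is the cone of the $q+1$ generators through $z$. Choose $x,y_1,y_2$ on three distinct generators of this cone, each different from $z$ and with the three points non-collinear (possible since there are $q+1\ge 3$ generators and, once $x,y_1$ are fixed, the line $xy_1$ meets the third generator in at most one point). Points on distinct generators of a cone are non-conjugate, so $x$ is covered by neither $y_1$ nor $y_2$ and $\cC=\{y_1,y_2\}$ is a cap. The lines $T_{xy_1}\cU$ and $T_{xy_2}\cU$ are the polar lines of $xy_1$ and $xy_2$, hence they meet in the pole of the plane $\langle x,y_1,y_2\rangle=T_z\cU$, namely $z\in\cU$. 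Thus their two traces share precisely the point $z$, the overlap is $2q+1$, and $r(x,\cC)=q^3+q^2-2q$ is achieved.

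The delicate point, and the one I would treat most carefully, is the separation step: one must be certain that the overlap really jumps from $q+1$ straight to $2q+1$ with nothing in between. This rests entirely on the planar incidence fact that two distinct lines of $T_x\cU$ meet in exactly one point, so two distinct traces share at most one point of $\cU$; dually, the attaining configuration requires that this single common point be forced onto $\cU$, which is arranged by placing $x,y_1,y_2$ in a tangent plane so that the common point is its point of tangency $z$. The only routine matter left is to confirm that the three generators can be taken distinct and the three points non-collinear, so that $\cC$ is genuinely a cap with $x$ uncovered; this causes no difficulty for any admissible $q$.
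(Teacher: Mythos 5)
Your proof is correct, and at its core it runs on the same counting as the paper: write $r(x,\cC)=(q^3+q^2+1)-|Gx\cap G\cC|$ and analyse the overlap as a union of $(q+1)$-point traces $T_{xy}\cU\cap\cU$ carried by lines of $T_x\cU$, with the second-largest value occurring precisely when two traces share a common point lying on $\cU$. Still, you organise the argument differently and, in one respect, more completely. The paper reduces without loss of generality to $\cC=\{y,z\}$ (implicitly via the monotonicity $r(x,\cC)\geq r(x,\cC')$ for $\cC\subseteq\cC'$) and enumerates the three possible values $q^3+q^2-q$, $q^3+q^2-2q-1$ and $q^3+q^2-2q$ according as the polar lines coincide or are concurrent at a point $p\notin\cU$ or $p\in\cU$; its case analysis also drags in the line $T_{yz}\cU$, which plays no role in the overlap $Gx\cap G\cC$. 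Your ``separation step'' replaces this enumeration by the direct observation, valid for an arbitrary cap $\cC$, that two distinct traces meet in at most one point, so the overlap is either exactly $q+1$ or at least $2q+1$; this dispenses with the reduction to two points altogether. More importantly, the paper never verifies that the case $p\in\cU$ actually occurs, so strictly speaking it only shows that $q^3+q^2-2q$ is the largest candidate value below the maximum; your explicit configuration --- $x,y_1,y_2$ chosen non-collinear on three distinct generators of the cone $T_z\cU\cap\cU$, so that the polar lines $T_{xy_1}\cU$ and $T_{xy_2}\cU$ meet exactly in the pole $z$ of the plane $\langle x,y_1,y_2\rangle=T_z\cU$ --- supplies the attainment half of the statement, and your supporting checks (points on distinct generators are non-conjugate because $T_x\cU\cap T_z\cU$ is the self-polar generator $xz$; there are $q+1\geq 3$ generators; collinearity can be avoided) are all sound. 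In short: same decomposition and same key fact from Proposition \ref{pro2}, but a more general gap argument and a genuinely added existence construction that the paper leaves implicit.
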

\begin{proof}
  As before, it might be assumed without loss of generality that
  $\cC=\{y,z\}$. Let $x\in\cU\setminus G\cC$.
  Then, either 
  \[ T_{xy}\cU=T_{xz}\cU=T_{xz}\cU=L, \]
  or
  \[ T_{xy}\cU\cap T_{yz}\cU\cap T_{xz}\cU=\{p\}. \]
  In the former case,  
  \[ r(x,\cC)=|T_x\cU\cap\cU|-|L\cap\cU|=q(q^2+q-1). \]
  In the latter, the lines $T_{xy}$, $T_{yz}$ and $T_{xz}$
  are not tangent to the surface $\cU$. Hence, each of
  them meets $\cU$ in $q+1$ points. 
  There are two possibilities:
  \begin{enumerate}
  \item
    if $p\not\in\cU$, then 
    \[ r(x,\cC)=q^2(q+1)+1-2(q+1)=q^3+q^2-2q-1; \]
  \item
    if $p\in\cU$, then 
    \[ r(x,\cC)=q^2(q+1)+1-2q-1=q^3+q^2-2q. \]
  \end{enumerate}
  The result follows
\end{proof}

We adopted two different approaches to the construction of
optimal completions of a partial cap $\cC$:
\begin{enumerate}
\item
  a forward--looking algorithm, in which points to be added are 
  chosen carefully at each iteration;
\item
  a backtracking technique, in which a small completion of the
  original cap, obtained, say, using Algorithm \ref{alg1},
  is enlarged by replacing suitable points.
\end{enumerate}
\subsection{The forward--looking approach}
The main advantage of this approach is that it is possible to estimate
{\em a priori} the complexity and the execution time of the algorithm;
however, unless all possible completions are examined or an 
ovoid is found, we are usually unable to
guarantee that the completion that has been constructed is
actually optimal.
\par
For any cap $\cC$, define two functions
\begin{eqnarray*}
  r^{+}(\cC):=&\max_{x\not\in G\cC}r(x,\cC); \\
  r^{-}(\cC):=&\min_{x\not\in G\cC}r(x,\cC). 
\end{eqnarray*}
Clearly, $r^{-}(\cC)=0$ if and only if $r^{+}(\cC)=0$ and
the cap $\cC$ is complete. One remarkable case arises 
when $r^{+}(\cC)=1$.
\begin{Pro}
  Let $\cC$ be a cap and suppose $r^{+}(\cC)=1$.
  Then, there exists exactly one complete cap $\widetilde{\cC}$
  such that $\cC\subseteq\widetilde{\cC}$ and
  \[ \widetilde{\cC}=\cC\cup(\cU\setminus G\cC). \]
\end{Pro}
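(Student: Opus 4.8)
The plan is to write $\cM:=\cU\setminus G\cC$ for the set of points not covered by $\cC$, so that the cap predicted by the statement is exactly $\widetilde{\cC}=\cC\cup\cM$. The first and decisive step is to decode the hypothesis $r^{+}(\cC)=1$. By Proposition~\ref{pro2}, every $x\in\cM$ satisfies $r(x,\cC)\geq 1$, while $r^{+}(\cC)=1$ gives $r(x,\cC)\leq 1$; hence $r(x,\cC)=|Gx\setminus G\cC|=1$ for each uncovered point $x$. Since $x\in Gx$ and $x\notin G\cC$, the point $x$ itself already accounts for this single new point, so $Gx\setminus G\cC=\{x\}$. In words: the only point of $\cU$ lying on a generator through an uncovered point $x$ that is not already covered by $\cC$ is $x$ itself.

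Next I would verify that $\widetilde{\cC}=\cC\cup\cM$ is a cap by ruling out the three ways a generator $L$ could meet it in two points. Two points of $\cC$ is impossible since $\cC$ is a cap. If $L$ contained $c\in\cC$ and $x\in\cM$, then $L\subseteq Gc\subseteq G\cC$ would force $x\in G\cC$, contradicting $x\in\cM$. Finally, if $L$ contained two distinct points $x,x'\in\cM$, then $x'\in L\subseteq Gx$, so $x'\in Gx\setminus G\cC=\{x\}$, whence $x'=x$, a contradiction. Thus every generator meets $\widetilde{\cC}$ in at most one point, and $\widetilde{\cC}$ is a cap.

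Completeness and uniqueness are then essentially formal. For completeness, note that every point of $\cU$ either lies in $G\cC$, hence is covered by $\cC\subseteq\widetilde{\cC}$, or lies in $\cM$, hence belongs to $\widetilde{\cC}$ itself; by Proposition~\ref{search} no further point can be adjoined, so $\widetilde{\cC}$ is complete. For uniqueness, let $\cD$ be any complete cap with $\cC\subseteq\cD$. A point $d\in\cD\setminus\cC$ cannot lie in $G\cC$, for otherwise a generator through some point of $\cC$ would also pass through $d$ and meet the cap $\cD$ in two points; hence $d\in\cM$, giving $\cD\subseteq\cC\cup\cM=\widetilde{\cC}$. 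Since $\cD$ is maximal and $\widetilde{\cC}$ is a cap, we conclude $\cD=\widetilde{\cC}$.

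The whole argument hinges on the single reduction $Gx\setminus G\cC=\{x\}$, and I expect the only genuinely substantive point to be the third case above, namely that two distinct uncovered points can never share a generator --- this is the unique place where the hypothesis $r^{+}(\cC)=1$ is used, and without it $\cC\cup\cM$ need not be a cap at all. The completeness and uniqueness steps use only that $\cC$ (and $\cD$) are caps together with the characterisation of covering in Proposition~\ref{search}.
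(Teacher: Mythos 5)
Your proof is correct, and it takes a structurally different route from the paper's. Both arguments hinge on the same key observation --- that $r^{+}(\cC)=1$ together with $x\in Gx$ and $x\notin G\cC$ forces $Gx\setminus G\cC=\{x\}$, i.e.\ $G(\cC\cup\{x\})=G\cC\cup\{x\}$ --- but you deploy it in a direct, one-shot verification, whereas the paper proves that $\cC\cup\cM$ is a cap by induction on $n=|\cM|$: it adjoins a single uncovered point $x$, checks that every remaining uncovered point still has relevance $1$ with respect to $\cC\cup\{x\}$, and invokes the inductive hypothesis. Your three-case analysis of how a generator could meet $\cC\cup\cM$ twice (two points of $\cC$; one point of each; two points of $\cM$) makes it transparent that the hypothesis $r^{+}(\cC)=1$ is used exactly once, in the third case, which the inductive formulation obscures; in exchange, the paper's induction isolates the stability fact $r^{+}(\cC\cup\{x\})=1$, which is the step one would reuse when running a completion algorithm point by point. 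You also gain something the paper leaves implicit: its proof only establishes that $\cC\cup(\cU\setminus G\cC)$ is a complete cap, while you explicitly verify the uniqueness claim, showing via Proposition~\ref{search} that any complete cap $\cD\supseteq\cC$ satisfies $\cD\subseteq\cC\cup\cM$ and hence equals $\widetilde{\cC}$ by maximality. One cosmetic remark: your appeal to Proposition~\ref{pro2} for the lower bound $r(x,\cC)\geq 1$ is unnecessary (and that proposition formally assumes $\cC$ non-empty); the bound is immediate from $x\in Gx\setminus G\cC$, which you in fact already use in the very next clause.
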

\begin{proof}
  Let $\cM=\cU\setminus G\cC$.
  Clearly, if $\cC\cup\cM$ is a cap, then it is complete,
  since all the points of $\cU$ are being covered by it.
  The proof that  $\cC\cup\cM$ is a cap is by induction on $n=|\cM|$.
  \par
  For $n=1$, the proposition is trivial.
  \par
  Assume now $n>1$, and
  let $x$ be a point of $\cM$.
  Since $r^{+}(\cC)=1$, then $r(x,\cC)=1$.
  Define $\cC^{x}=\cC\cup\{x\}$. Clearly $\cC^{x}$ is a cap;
  furthermore, 
  \[ G(\cC^{x})=G\cC\cup\{x\}, \]
  that is
  \[ \cM^{x}:=(\cU\setminus\cC^{x})=\cM\setminus\{x\}. \]
  Hence, $|\cM^{x}|=n-1$ and for any $y\in\cM^{x}$,
  \[ r(y,\cC^{x})=1. \]
  The result now follows from the inductive assumption.
\end{proof}
\begin{Pro}
  The function $r^{+}$ is monotonic non--increasing, in
  the sense that
  \[ \cC'\subseteq\cC \Rightarrow r^{+}(\cC')\geq r^{+}(\cC). \]
\end{Pro}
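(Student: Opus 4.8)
The plan is to deduce the monotonicity of the \emph{maximum} $r^{+}$ from the pointwise monotonicity of the relevance $r(\cdot,\cdot)$ in its second argument, a fact already recorded at the start of the proof of Proposition~\ref{pro2}. The only genuine care required is that $r^{+}(\cC')$ and $r^{+}(\cC)$ are maxima taken over two \emph{different} index sets, so one cannot simply compare the functions termwise; the argument has to track how those index sets are nested.

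First I would note that $\cC'\subseteq\cC$ forces $G\cC'\subseteq G\cC$, since $G\cC=\bigcup_{x\in\cC}Gx$ and the union on the right only grows as $\cC$ grows. Writing $r(x,\cD)=|Gx\setminus G\cD|$, the inclusion $G\cC'\subseteq G\cC$ yields $Gx\setminus G\cC\subseteq Gx\setminus G\cC'$, and hence
\[ r(x,\cC')\ge r(x,\cC)\qquad\text{for every }x\in\cU. \]

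Next I would handle the index sets. By definition $r^{+}(\cC)=\max_{x\notin G\cC}r(x,\cC)$, the maximum being over $\cU\setminus G\cC$, while $r^{+}(\cC')$ is the maximum over the larger set $\cU\setminus G\cC'\supseteq\cU\setminus G\cC$. Choosing a point $x_0\notin G\cC$ that attains $r^{+}(\cC)$, the inclusion $G\cC'\subseteq G\cC$ gives $x_0\notin G\cC'$ as well, so $x_0$ is admissible in the maximisation defining $r^{+}(\cC')$. Combining this with the pointwise inequality above,
\[ r^{+}(\cC')\ge r(x_0,\cC')\ge r(x_0,\cC)=r^{+}(\cC), \]
which is exactly the claim.

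The one case that escapes this reasoning is when $\cC$ is already complete, so that $\cU\setminus G\cC=\emptyset$ and no maximiser $x_0$ exists. Here I would fall back on the convention noted earlier that $r^{+}(\cC)=0$ precisely when $\cC$ is complete; since relevances are non-negative, $r^{+}(\cC')\ge 0=r^{+}(\cC)$ holds trivially. I do not expect any real obstacle: the substance is entirely the elementary monotonicity $G\cC'\subseteq G\cC$, and the proof is essentially a bookkeeping argument once the nesting of the two index sets is made explicit.
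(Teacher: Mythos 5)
Your proof is correct and follows essentially the same route as the paper: both arguments take a point $x_0\notin G\cC$ attaining $r^{+}(\cC)$, observe that it remains uncovered by the smaller cap, and exploit the monotonicity of $r(\cdot,\cdot)$ in its second argument. The only differences are cosmetic --- the paper reduces without loss of generality to removing a single point $x$ and invokes the identity $r(y,\cC_x)=r(y,\cC)+|(Gx\cap Gy)\setminus\cC_x|$, whereas you argue directly from $r(x,\cD)=|Gx\setminus G\cD|$ for arbitrary nested caps and, unlike the paper, explicitly dispatch the degenerate case in which $\cC$ is complete.
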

\begin{proof}
  It is possible to assume without loss of generality
  $\cC'=\cC_{x}$.
  Take $y\in\cU$ to be a point of $\cU\setminus G\cC$ such
  that $r(y,\cC)=r^{+}(\cC)$. Then,
  \[ r^{+}(\cC_x)\geq r(y,\cC_x)=r(y,\cC)+|(Gx\cap Gy)\setminus\cC_x|\geq%
  r^{+}(\cC). \]
  The result follows.
\end{proof}
The simplest  selection technique which can be used in order to
construct large complete caps is {\em to choose at each iteration
  a point in $\cU$ of minimal relevance}, that is $x\in\cU$ such that
\[ r(x,\cC)=r^{-}(\cC). \]
Clearly, this is the choice for a point to be added to $\cC$ which
is `locally best', in the sense that 
it always minimises the number of new covered points.
However, the function $r^{-}(\cC)$ needs not be monotonic and
this approach might leave points of  weight regrettably  large to be
added in the final stages of the construction --- the cap thus
obtained, hence, may not be {\em maximum}.
In order to get further insights on  this issue, the algorithm has been
tested  providing as initial input a small subset of the
points of a known ovoid.
The results of this approach are discussed in Section \ref{sec:32}.
\par
It has been seen that, if the initial datum is small and random, 
then the result is a complete cap of size which usually
approximates $q^3-q^2$.
This confirms that,
 while the biased algorithm  provides caps much larger than the ones
 of Algorithm \ref{alg1}, none the less
 the choice of the point $x$ to be added to $\cC$ at
 each iteration should not depend only on the value of $r^{-}(\cC)$.
\par

\begin{Pro}
  \label{ov1le}
  Let $\cO$ be an ovoid. Then, for any $x\in\cO$, 
  \[ r(x,\cO_x)=1.\]
\end{Pro}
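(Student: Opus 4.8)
The plan is to reduce the computation of $r(x,\cO_x)$ to a count of covered points, using the identities already established, and then to show that deleting a single point from an ovoid leaves exactly that one point uncovered.

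First I would record that since $\cO$ is an ovoid it is in particular a complete cap, so by Proposition \ref{pr2} we have $|G\cO|=(q^3+1)(q^2+1)=|\cU|$; equivalently $G\cO=\cU$. Applying the first formula of Proposition \ref{pr2} to the cap $\cO$ and the point $x\in\cO$ gives
\[ |G\cO_x|=|G\cO|-r(x,\cO_x), \]
so that $r(x,\cO_x)=|\cU|-|G\cO_x|$. Thus it suffices to prove that $\cO_x$ covers exactly $|\cU|-1$ points, and more precisely that $G\cO_x=\cU\setminus\{x\}$.

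The key step is the explicit determination of $G\cO_x$. On one hand, every generator through $x$ meets the ovoid $\cO$ in exactly one point, which must be $x$ itself; hence no generator through $x$ meets $\cO_x$, and so $x\notin G\cO_x$. On the other hand, let $p\in\cU$ with $p\neq x$. Through $p$ pass exactly $q+1$ generators, each meeting $\cO$ in precisely one point. At most one of these generators can contain $x$ --- namely the line $px$, should it be a generator --- since two distinct lines meet in at most one point. Every other generator through $p$ avoids $x$, so its unique point of $\cO$ lies in $\cO_x$; as $q+1\geq 2$, at least one such generator exists, and therefore $p\in G\cO_x$. Combining the two observations yields $G\cO_x=\cU\setminus\{x\}$, whence $|G\cO_x|=|\cU|-1$ and $r(x,\cO_x)=1$.

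The main obstacle is the second half of the key step: one must use both the defining property of an ovoid (each generator meets $\cO$ exactly once) and the elementary incidence fact that the $q+1$ generators through $p$ cannot all pass through the fixed point $x$. Everything else is bookkeeping with the identities of Proposition \ref{pr2}. As a consistency check, the relation $x\notin G\cO_x$ places us in the hypotheses of Proposition \ref{pro2}, which already guarantees $r(x,\cO_x)\geq 1$; the argument above pins down the exact value.
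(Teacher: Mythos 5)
Your proof is correct and rests on exactly the same incidence count as the paper's: through any point $p\neq x$ of $\cU$ there pass $q+1$ generators, at most one of which (the line $px$) can contain $x$, and each of the remaining ones meets $\cO$ in a point of $\cO_x$. The paper packages this as a contradiction at a hypothetical second uncovered point $y\in Gx\setminus G\cO_x$, whereas you run it directly to obtain $G\cO_x=\cU\setminus\{x\}$ and then translate into $r(x,\cO_x)=1$ via Proposition \ref{pr2}; this is a reorganization (direct instead of by contradiction) of the same argument, not a different route.
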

\begin{proof}
  Any point $p\in\cU$ belongs to exactly $q+1$ generators.
  An ovoid
  $\cO$ is a set of $q^3+1$ points which blocks all 
  $(q^3+1)(q+1)$ lines of the Hermitian surface $\cU$; hence, for
  each $[\in\cU$, all generators through $p$ are blocked.
  \par
  Assume now that $r(x,\cO_x)>1$. Then, there is a point
  $y\in\cU\setminus\cO$ such that $y\in Gx$ and $y\not\in Gz$ for
  any $z\in\cO_x$.
  Clearly, the only line through $y$ which is blocked by $x$
  is $xy$.
  It follows that there are at least $q$ points of $\cO_x$
  which cover $y$ --- a contradiction.
  It follows that $r(x,\cO_x)=1$.
\end{proof}

\begin{Pro}
  \label{ov2le}
  Let $\cO$ be an ovoid. Then, for any $\Omega\subseteq\cO$ such
  that $|\Omega|<q+1$,
  \[ r^{+}(\cO\setminus\Omega)=1. \]
\end{Pro}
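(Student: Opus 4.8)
The plan is to identify the set of points of $\cU$ left uncovered by $\cC:=\cO\setminus\Omega$ and then to read off the relevance of each of them directly. Writing $\cM=\cU\setminus G\cC$ for this uncovered set, and assuming $\Omega\neq\emptyset$ (the case $\Omega=\emptyset$ giving the complete ovoid, for which $r^{+}=0$), the cap $\cC$ is proper, so $\cM\neq\emptyset$ and by Proposition~\ref{pro2} we already have $r^{+}(\cC)\geq 1$. It will therefore be enough to establish the upper bound, namely that $r(x,\cC)=1$ for every $x\in\cM$.

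First I would pin down $\cM$ by a counting argument resting on the defining property of an ovoid. Each point $y\in\cU$ lies on exactly $q+1$ generators, and each such generator meets $\cO$ in a single point; moreover two distinct generators through $y$ meet only in $y$. Hence, if $y\notin\cO$ the $q+1$ points so obtained are pairwise distinct and $|Gy\cap\cO|=q+1$, whereas if $y\in\cO$ no generator through $y$ can meet $\cO$ a second time, so $Gy\cap\cO=\{y\}$. Since $\cC\subseteq\cO$, the point $y$ is covered by $\cC$ exactly when $Gy\cap\cO\not\subseteq\Omega$. For $y\notin\cO$ this always holds because $|Gy\cap\cO|=q+1>|\Omega|$, and this is precisely where the hypothesis $|\Omega|<q+1$ is used; for $y\in\cO$ one has $Gy\cap\cO=\{y\}$, so $y$ is uncovered if and only if $y\in\Omega$. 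Consequently $\cM=\Omega$.

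It then remains to compute the relevance of each uncovered point. From $r(x,\cC)=|Gx\cup G\cC|-|G\cC|=|Gx\setminus G\cC|$ and $\cM=\Omega$, we obtain $r(x,\cC)=|Gx\cap\Omega|$ for $x\in\Omega$. Since $x\in\Omega\subseteq\cO$, the identity $Gx\cap\cO=\{x\}$ forces $Gx\cap\Omega=\{x\}$, whence $r(x,\cC)=1$; taking the maximum over $x\in\cM$ gives $r^{+}(\cC)=1$. (The lower bound can also be seen from $r(x,\cC)\geq r(x,\cO_x)=1$, using Proposition~\ref{ov1le} together with the monotonicity $\cC\subseteq\cO_x\Rightarrow r(x,\cC)\geq r(x,\cO_x)$.) The main obstacle is the first step: one must ensure that removing fewer than $q+1$ points of the ovoid leaves \emph{no} point outside $\cO$ uncovered, and this is exactly the content of the bound $|\Omega|<q+1$ via the count $|Gy\cap\cO|=q+1$ for external points. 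Once $\cM=\Omega$ is in hand, the evaluation of the relevance is immediate.
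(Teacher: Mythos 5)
Your proof is correct and takes essentially the same route as the paper's: both rest on the count that every point of $\cU\setminus\cO$ lies on $q+1$ generators, each meeting $\cO$ exactly once, so that removing fewer than $q+1$ points of $\cO$ leaves all external points covered, and the uncovered set reduces to $\Omega$ itself, where each point has relevance $1$ because no generator through a point of $\cO$ meets $\cO$ (hence $\Omega$) a second time. Your version merely spells out the identification $\cM=\Omega$ and the degenerate case $\Omega=\emptyset$, which the paper's terser proof leaves implicit.
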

\begin{proof}
  Any point $y\in\cU\setminus\cO$ is covered by $q+1$ points
  of $\cO$. Hence, all the points of $\cU\setminus\cO$ are
  covered by the cap $\cO\setminus\Omega$. Since $\Omega$ is a cap,
  it follows that the relevance of each $x\in\Omega$ is  $1$,
  which provides the result.
\end{proof}
An immediate consequence of Proposition \ref{ov2le} is that
if a set $\cC$ of $q^3-q+1$ points is contained in an ovoid $\cO$, then
$\cO$ is the only complete cap containing $\cC$.
\begin{Cor}
  Let $\cO$ and $\cO'$ be two distinct ovoids. Then,
  \[ |\cO\setminus\cO'|\geq q+1. \]
\end{Cor}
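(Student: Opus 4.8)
The plan is to argue by contradiction, exploiting Proposition \ref{ov2le} together with the earlier uniqueness result for caps whose maximal relevance equals one. Suppose that $\cO$ and $\cO'$ are distinct ovoids but that $|\cO\setminus\cO'|<q+1$, and set $\Omega:=\cO\setminus\cO'$. Then $\Omega$ is a subset of $\cO$ with $|\Omega|<q+1$, and the bookkeeping identity $\cO\setminus(\cO\setminus\cO')=\cO\cap\cO'$ shows that the residual cap $\cC:=\cO\setminus\Omega$ is exactly the common part $\cO\cap\cO'$.

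First I would apply Proposition \ref{ov2le} to this $\Omega$, which yields $r^{+}(\cC)=r^{+}(\cO\setminus\Omega)=1$. Thus $\cC=\cO\cap\cO'$ satisfies the hypothesis of the proposition asserting that a cap of maximal relevance one admits a unique complete completion; consequently there is exactly one complete cap containing $\cC$, namely $\cC\cup(\cU\setminus G\cC)$.

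To reach the contradiction I would then observe that both $\cO$ and $\cO'$ are complete caps --- indeed ovoids are complete --- and that both contain $\cC=\cO\cap\cO'$. By the uniqueness just established, $\cO$ and $\cO'$ must coincide, contradicting the assumption that the two ovoids are distinct. Hence $|\cO\setminus\cO'|\geq q+1$, as claimed.

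I do not anticipate a serious obstacle, since essentially all of the content is carried by Proposition \ref{ov2le} and the uniqueness statement derived from $r^{+}(\cC)=1$. The only points requiring care are the set-theoretic identity $\cO\setminus(\cO\setminus\cO')=\cO\cap\cO'$, which is what guarantees that the residual set is genuinely common to both ovoids, and the recognition that the hypothesis $|\cO\setminus\cO'|<q+1$ is precisely the condition $|\Omega|<q+1$ needed to invoke Proposition \ref{ov2le}. This last observation also explains why $q+1$ is the natural and sharp threshold for the bound.
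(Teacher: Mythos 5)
Your proof is correct and follows essentially the same route as the paper: the corollary is stated there as an immediate consequence of Proposition \ref{ov2le} together with the earlier uniqueness-of-completion result for caps with $r^{+}(\cC)=1$, which is exactly the pair of facts you invoke on $\cC=\cO\cap\cO'$. Your write-up simply makes explicit the contradiction argument the paper leaves implicit.
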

There are complete ovoids which differ in exactly $q+1$ points; for
instance, this is the case for ovoids obtained from 
each other by derivation, see
\cite{PT}.
\begin{Pro}
  \label{ov3le}
  Let $\cO$ be an ovoid. Then, there is $\Omega\subseteq\cO$ such
  that $|\Omega|\geq\frac{1}{2}(q^2+q)$ and the only complete cap containing
  $\cO':=\cO\setminus\Omega$ is $\cO$.
\end{Pro}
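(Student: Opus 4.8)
The plan is to reduce the statement to a purely combinatorial property of a set $\Omega\subseteq\cO$ and then to produce such an $\Omega$ of the required size. For $y\in\cU\setminus\cO$ write $C_y:=\{x\in\cO : y\in T_x\cU\}$ for the set of points of $\cO$ that cover $y$; as recorded in the proof of Proposition~\ref{ov2le}, one has $|C_y|=q+1$. Call $\Omega\subseteq\cO$ \emph{block-free} if $C_y\not\subseteq\Omega$ for every $y\in\cU\setminus\cO$. I would first show that any block-free $\Omega$ already does the job. If $\Omega$ is block-free then every $y\in\cU\setminus\cO$ still lies on a generator meeting $\cO':=\cO\setminus\Omega$, so $\cU\setminus\cO\subseteq G\cO'$; since each point of $\cO'$ covers itself, while each $x\in\Omega$ lies only on generators meeting $\cO$ in $x\notin\cO'$ (because $\cO$ is an ovoid), one gets
\[ \cU\setminus G\cO'=\Omega . \]
For the same reason $Gx\cap\cO=\{x\}$, whence $Gx\setminus G\cO'=\{x\}$ and $r(x,\cO')=1$ for every $x\in\Omega$; thus $r^{+}(\cO')=1$. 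The proposition characterising caps with $r^{+}(\cC)=1$ then yields a unique complete cap containing $\cO'$, namely $\cO'\cup(\cU\setminus G\cO')=\cO'\cup\Omega=\cO$, as required.

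It remains to exhibit a block-free $\Omega$ with $|\Omega|\ge\frac12(q^2+q)$. I would build one around a single generator. Fix a generator $g$ of $\cU$ and let $P_0$ be the unique point of $g\cap\cO$. For $Q\in g\setminus\{P_0\}$ the plane $T_Q\cU$ contains $g$, so $P_0\in C_Q$; moreover, since a generator is self-polar, $T_Q\cU\cap T_{Q'}\cU=g$ for distinct $Q,Q'\in g$, and therefore $C_Q\cap C_{Q'}=\cO\cap g=\{P_0\}$. Hence the \emph{petals} $C_Q\setminus\{P_0\}$, with $Q\in g\setminus\{P_0\}$, are pairwise disjoint sets of size $q$. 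Choosing points $Q_1,\dots,Q_k\in g\setminus\{P_0\}$ and setting
\[ \Omega:=\bigcup_{i=1}^{k}\bigl(C_{Q_i}\setminus\{P_0\}\bigr), \qquad |\Omega|=kq, \]
every block $C_y$ with $y\in g$ contains $P_0\notin\Omega$ and is automatically avoided. For $y\notin g$ one has $|C_y\cap\Omega|=\sum_{i=1}^{k}|C_y\cap C_{Q_i}|$, where each term equals $|\cO\cap(yQ_i)^{\perp}|$, the number of ovoid points on the polar line of the chord $yQ_i$. If this quantity is at most two, then $|C_y\cap\Omega|\le 2k$, and keeping $2k$ below $q+1$ forces $C_y\not\subseteq\Omega$, so $\Omega$ is block-free; taking $k$ as large as this allows then gives $|\Omega|=kq$ of order $\frac12(q^2\pm q)$, and a careful choice of the $Q_i$ (ensuring the last petal meets each such $C_y$ in at most one point) is what would bring the size up to $\frac12(q^2+q)$.

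The crux—and the step I expect to be the main obstacle—is therefore the Hermitian estimate bounding $|\cO\cap(yQ)^{\perp}|$, equivalently the number of blocks through a fixed pair of points of $\cO$: the chord $yQ$ is a secant, so its polar $(yQ)^{\perp}$ is again a secant meeting $\cU$ in exactly $q+1$ points, and one must control how many of these lie on the ovoid. A robust alternative, avoiding a sharp geometric bound, is a counting argument on a \emph{maximal} block-free set $\Omega$ of size $s$: maximality forces every one of the $q^3+1-s$ points of $\cO\setminus\Omega$ to be the unique missing point of some block whose other $q$ points lie in $\Omega$, and bounding the number of such witness blocks by $(q+1)\binom{s}{2}/\binom{q}{2}$ (using that each pair of points of $\cO$ lies in at most $q+1$ blocks) gives
\[ q^3+1-s\le\frac{(q+1)\,s(s-1)}{q(q-1)} . \]
One checks that this inequality fails whenever $s<\frac12(q^2+q)$ for $q\ge 3$, forcing $s\ge\frac12(q^2+q)$; the case $q=2$ is trivial, as any three points of $\cO$ that do not form a block give a block-free set of the required size. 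Either route reduces the whole statement to the same elementary fact about the polar of a chord, which is where the real work lies.
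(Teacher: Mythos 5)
Your proposal is correct, but only via your second, counting route, and that route is genuinely different from the paper's. Your opening reduction --- a block-free $\Omega$ gives $\cU\setminus G\cO'=\Omega$, hence $r(x,\cO')=1$ for all $x\in\Omega$, so $r^{+}(\cO')=1$ and the paper's proposition on caps with $r^{+}(\cC)=1$ forces the unique completion $\cO'\cup\Omega=\cO$ --- is sound, and it is essentially the reduction the paper uses implicitly. The divergence is in producing a large block-free set. The paper is constructive and greedy: it fixes external points $P_0,\dots,P_{q-1}$ and at step $i$ removes $q-i$ of the points covering $P_i$, keeping the $\Omega_i$ pairwise disjoint, for a total of exactly $\frac{1}{2}q(q+1)$. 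You instead take an arbitrary inclusion-maximal block-free set and bound its size $s$ from below by double counting witness blocks. I checked your inequality: with $s=\frac{1}{2}q(q+1)$ one gets $(q^3+1-s)-\frac{(q+1)s(s-1)}{q(q-1)}=\frac{1}{4}(3q^3-6q^2-7q+2)>0$ for $q\ge 3$, and since the left side decreases and the right side increases in $s$, the inequality indeed fails for all $s\le\frac{1}{2}(q^2+q)$; your separate $q=2$ check is fine. Both ingredients you use are true: blocks have $q+1$ points, and a pair of ovoid points lies in at most $q+1$ blocks because the polar of a chord is a chord. Your argument is arguably \emph{more} robust than the paper's: the paper's step ``any point of $\cU\setminus\cO$ different from the $P_j$'s satisfies this condition'' silently needs $|C_y\cap\Omega_j|\le 1$, i.e.\ control of $|C_y\cap C_{P_j}|=|\cO\cap (yP_j)^{\perp}|$ --- precisely the quantity you flagged --- whereas your counting needs no such control. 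What the paper's construction buys is explicitness and no small-$q$ case split.

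Two caveats. First, your petal construction does not work as stated: the hoped-for bound $|\cO\cap(yQ_i)^{\perp}|\le 2$ is false in general --- for the classical ovoid $\cO=\cU\cap\pi$ the blocks are full $(q+1)$-point line sections of the Hermitian curve, so $C_y\cap C_{Q_i}$ can have $q+1$ points. Second, your closing sentence is inaccurate: the two routes do \emph{not} rest on the same fact. The counting route needs only the weak, standard bound of $q+1$ ovoid points on the polar of a chord, which you already invoked correctly; no ``real work'' remains there, and that route alone is a complete proof.
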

\begin{proof}
  The set $\Omega$ will be constructed step by step.
  Let $P_0$ be any point of $\cU\setminus\cO$; then, $P_0$ is covered
  by $q+1$ points of $\cO$. Take now as
  $\Omega_0$ any set of $q$ points of $\cO$ covering $P_0$ and
  let
  \[ \Lambda_1:=\cO\setminus\Omega_0. \]
  From Proposition \ref{ov2le}, 
  the only complete cap
  containing $\Lambda_1$ is $\cO$. \par
  For each $q>i>0$, fix a point $P_i$ in 
  $\cU\setminus\cO$ such that 
  $P_i$ is covered by at least $q+1-i$ points of
  \[ \Lambda_i:=\Lambda_{i-1}\setminus\Omega_{i-1}. \]
  Observe that any point of $\cU\setminus\cO$ different from
  the $P_j$'s with $j<i$ satisfies this condition.
  Then, let $\Omega_i$ be a set of $q-i$ points of
  $\Lambda_i$ covering $P_i$ --- it follows that
  $\Omega_i$ is, by construction, disjoint from any
  of the $\Omega_j$ for $j<i$. 
  This procedure may be iterated $q$ times.
  Define now
  \[ \Omega:=\bigcup_{i=0}^{q-1}\Omega_i. \]
  Since, for $i\neq j$, 
  \[ \Omega_i\cap\Omega_j=\emptyset, \]
  the cardinality of $\Omega$ is $\frac{1}{2}q(q+1)$.
  Furthermore, each point of 
  $\cU\setminus\cO$ is covered by $\cO'$.
  It follows that any completion of $\cO'$ is contained
  in $\cO'\cup\Omega$.
  The result is now a consequence of the fact that
  $\cO'\cup\Omega$ is a complete cap.
\end{proof}
Propositions \ref{ov1le}, \ref{ov2le} and \ref{ov3le} suggest that a
 a partial cap $\cC$ of size approximately $q^3-q^2$ could be enough to
 determine an ovoid. However,  in order for such a set $\cC$ to be contained
 in an ovoid, it is necessary that many of
 the points of  $\cU\setminus G\cC$ have small relevance.
\par
This inspired the following strategy to look for large caps when
provided only with a small initial datum:
rather than choosing every time a point with the smallest relevance,
it is possible to pick an $x$ which yields a large number of
points of minimal relevance for $\cC^{x}$. 
\par
This approach may be implemented as follows.
Given a cap $\cC$ and a point $x$, define
$\rho^{-}(x,\cC)$ as the number of points $t$ in $\cC_x$ such that
$r(t,\cC_x)=r^{-}(\cC_x)$.
Then,
\[ \rho^{-}(x,\cC):=
|\{t\in\cU: r(t,\cC_x)=r^{-}(\cC_{x})\}|. \]
In Algorithm \ref{alg3}, a point $x$ which maximises $\rho^{-}(\cC)$ is
determined.
The symbol $\oplus$ is used to denote the concatenation of
two ordered lists.
\begalg
\label{alg3}
\begin{tabular}{ll}
  {\bf Input:} & a cap $\cC$; \\
  {\bf Output:}& a point $x\not\in G\cC$.
\end{tabular}
\vskip.2cm
Fw\_Complete:=
\begin{enumerate}
\item if $r^{-}(\cC)=1$, then return any $x\in G\cC$ and exit;
\item $M\leftarrow[\ ]$;
\item For $t\not\in G\cC$,
  \begin{enumerate}
  \item $\cC_{0}\leftarrow \cC\cup\{t\}$;
  \item $L\leftarrow \{x\in\cU: r(x,\cC_{0})=r^{-}(\cC_{0})\}$;
  \item $M\leftarrow M\oplus[L]$;
  \end{enumerate}
\item $k\leftarrow\min \{|L|:L\in M\}$;
\item select $x\in G\cC$ such that 
  $\rho^{-}(x,\cC)=k$.
\end{enumerate}
\caption{Point selection: forward search}
\enalg
\subsection{The backtracking approach}

\begin{Pro}
  \label{sugg}
  Let $\cC$ be a complete cap of cardinality $n$ and
  assume that there is $p\in\cC$ such that
  for some $x\in Gp\setminus G\cC_p$,
  \[ r(p,\cC_p)>r(x,\cC_p). \]
  Then, the cap $\cC_p$ is contained in a complete cap of cardinality
  at least $n+1$.
\end{Pro}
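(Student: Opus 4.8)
The plan is to keep all of $\cC$ except $p$, substitute the point $x$ in place of $p$, and then argue that the resulting cap is \emph{not} complete, so that any completion of it must be strictly larger than $\cC$. First I would record the cardinality bookkeeping. Since $\cC$ is complete, Proposition \ref{pr2} gives $|G\cC|=(q^3+1)(q^2+1)=|\cU|$. Because $\cC$ is a cap, no generator through $p$ can meet $\cC_p$, so $p\not\in G\cC_p$; applying the first identity of Proposition \ref{pr2} (with $p$ in the role of $x$) yields
\[ |G\cC_p|=|G\cC|-r(p,\cC_p). \]

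Next I would form the set $\cC_p\cup\{x\}$. By hypothesis $x\in Gp\setminus G\cC_p$, so in particular $x\not\in G\cC_p$; since $x\in Gx$, this also forces $x\not\in\cC_p$, and the strict inequality $r(p,\cC_p)>r(x,\cC_p)$ forces $x\neq p$. By Proposition \ref{search}, $\cC_p\cup\{x\}$ is therefore a genuine cap, of cardinality $|\cC_p|+1=n$. The second identity of Proposition \ref{pr2} (with base cap $\cC_p$ and added point $x\not\in G\cC_p$) gives $|G(\cC_p\cup\{x\})|=|G\cC_p|+r(x,\cC_p)$, and combining with the previous display,
\[ |\cU\setminus G(\cC_p\cup\{x\})|=|\cU|-|G(\cC_p\cup\{x\})|=r(p,\cC_p)-r(x,\cC_p). \]

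The crucial observation is that the hypothesis $r(p,\cC_p)>r(x,\cC_p)$ makes this count strictly positive: at least one point of $\cU$ is left uncovered by $\cC_p\cup\{x\}$. Hence $\cC_p\cup\{x\}$ is not complete. Feeding it to Algorithm \ref{alg1}, I would extend it to a complete cap $\widetilde{\cC}$ with $\cC_p\cup\{x\}\subseteq\widetilde{\cC}$. Because $\cC_p\cup\{x\}$ is itself not complete, this extension must adjoin at least one further point, so $|\widetilde{\cC}|\geq|\cC_p\cup\{x\}|+1=n+1$. Since $\cC_p\subseteq\widetilde{\cC}$, this $\widetilde{\cC}$ is the required complete cap.

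I do not expect a genuine obstacle here: the argument is essentially bookkeeping with the two cardinality identities of Proposition \ref{pr2}. The one step that deserves a moment's care is the passage from ``$\cC_p\cup\{x\}$ is not complete'' to ``every complete cap containing it is strictly larger''. This rests on the fact that completeness means maximality under inclusion, so a complete cap properly containing a non-complete cap necessarily has strictly greater cardinality — which is exactly why the single uncovered point produced above upgrades the cardinality from $n$ to at least $n+1$.
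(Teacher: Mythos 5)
Your proposal is correct and follows essentially the same route as the paper: the paper likewise swaps $x$ in for $p$, applies Proposition \ref{pr2} to get $|G(\cC_p\cup\{x\})|=|G\cC|-r(p,\cC_p)+r(x,\cC_p)<(q^3+1)(q^2+1)$, and concludes that $\cC_p\cup\{x\}$ is a non-complete cap of cardinality $n$ whose completion must have size at least $n+1$. Your version merely makes explicit a few steps the paper leaves implicit (that $p\not\in G\cC_p$, that $\cC_p\cup\{x\}$ is indeed a cap via Proposition \ref{search}, and the final completeness-implies-maximality step), which is fine.
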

\begin{proof}
  From Proposition \ref{pr2},
  \[ |G(\cC_p\cup\{x\})|=|G\cC|-r(p,\cC_p)+r(x,\cC_p). \]
  Since $r(x,\cC_p)<r(p,\cC_p)$, it follows that
  \[ |G(\cC_p\cup\{x\})|<(q^3+1)(q^2+1).\]
  Hence, $\cC_p\cup\{x\}$ is a cap of cardinality $n$ which is not
  complete and contains $\cC$. The result follows.
\end{proof} 

Another way to construct large caps is, as
Proposition \ref{sugg}
suggests,
by a backtracking procedure.
The main idea underlying this technique is to start with a small
complete cap $\cC$ and try to replace points with large relevance with
others whose relevance is smaller.
\par
In general, it might not be possible to find a good replacement if
only one point is removed; this is the case, for example,
 when the starting cap is already fairly large.

For instance,
according to Proposition \ref{ov2le}, if 
a cap $\cC$ has size is at least $q^3-q+1$ and it
is contained in an ovoid $\cO$, then all the points which are not covered
by $\cC$ have relevance $1$. 
Clearly, in order to succeed, the algorithm needs to remove
as many points from the cap as to be able to fine
some point which is
not covered anymore and that has relevance larger than $1$.
\par
However,
as the following propositions show, 
it has to be expected that very few points of a minimal
complete cap have small relevance. Furthermore, 
if any point of a complete cap $\cC$ has large relevance, then
it is always possible to construct another complete cap $\cC'$ in such 
a way as to have
$|\cC\setminus\cC'|=1$ and $|\cC'|>|\cC|+1$.
\begin{Pro}
  Let $\cC$ be a complete cap and assume that there is $p\in\cC$ such
  that $r(p,\cC_p)>q^2+1$.
  Then, for any  $x\in \Gamma_p:=Gp\setminus (G\cC_p\cup\{p\})$,
  \[ r(x,\cC_p)<r(p,\cC_p). \]
\end{Pro}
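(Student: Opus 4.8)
The plan is to compute the relevance $r(x,\cC_p)$ exactly and show that it always equals $q^2+1$, which sits strictly below the assumed value $r(p,\cC_p)>q^2+1$. The starting point is the difference formula recorded just before Proposition~\ref{pr2}, namely $r(x,\cC_p)=r(x,\cC)+|(Gp\cap Gx)\setminus\cC_p|$, read with $p$ as the removed point and $x$ as the test point. Since $\cC$ is complete we have $G\cC=\cU$, hence $Gx\subseteq G\cC$ and therefore $r(x,\cC)=0$. This collapses the formula to $r(x,\cC_p)=|(Gp\cap Gx)\setminus\cC_p|$, so the entire problem reduces to understanding the set $Gp\cap Gx$ for $x\in\Gamma_p$.

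First I would pin down $Gp\cap Gx$. Writing $Gp=T_p\cU\cap\cU$ and $Gx=T_x\cU\cap\cU$ gives $Gp\cap Gx=T_p\cU\cap T_x\cU\cap\cU$. Because $x\in Gp$ and $x\neq p$, the points $p$ and $x$ lie on a common generator, and the line $px$ is precisely that generator; call it $L$. Then $L\subseteq T_p\cU$ and $L\subseteq T_x\cU$, so $L\subseteq T_p\cU\cap T_x\cU$. The unitary polarity induced by $\cU$ is a bijection between points and planes, so $T_p\cU=T_x\cU$ would force $p=x$; as $x\neq p$, the two tangent planes are distinct and meet in exactly one line, which must be $L$. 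Hence $Gp\cap Gx=L\cap\cU=L$, a set of $q^2+1$ points.

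It then remains to handle the subtraction of $\cC_p$. Since $\cC$ is a cap and $L$ is a generator, $|L\cap\cC|\le 1$; as $p\in L\cap\cC$ this forces $L\cap\cC=\{p\}$ and therefore $L\cap\cC_p=\emptyset$. Consequently $(Gp\cap Gx)\setminus\cC_p=L$, so $r(x,\cC_p)=|L|=q^2+1$ for every $x\in\Gamma_p$, and the hypothesis $r(p,\cC_p)>q^2+1$ yields $r(x,\cC_p)<r(p,\cC_p)$ at once.

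The only place that demands care — the main obstacle, such as it is — is the identification $Gp\cap Gx=L$: one must be certain that the two tangent planes are genuinely distinct (so their intersection is a single line rather than a whole plane) and that no extra points of $\cU$ enter $T_p\cU\cap T_x\cU$ beyond those of the common generator. Both facts rest only on the non-degeneracy of $\cU$, that is, on the injectivity of the polarity together with the standard fact that two tangent planes of the Hermitian surface meet in a line; once these are invoked, the rest is routine bookkeeping.
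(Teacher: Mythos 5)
Your core geometry is exactly the paper's: completeness gives $r(x,\cC)=0$, the intersection $Gp\cap Gx$ is the generator $L=px$ (your polarity argument that $T_p\cU\neq T_x\cU$, so that their intersection is the single line $L$, is sound), and everything is measured against $|L|=q^2+1$. The genuine flaw is in the step you dismiss as routine bookkeeping: the difference formula must subtract the \emph{covered set} $G\cC_p$, not the point set $\cC_p$. Re-deriving it from the definitions, $r(x,\cC_p)=|Gx\setminus G\cC_p|$ and $r(x,\cC)=|Gx\setminus(G\cC_p\cup Gp)|$, whence
\[ r(x,\cC_p)=r(x,\cC)+\bigl|(Gx\cap Gp)\setminus G\cC_p\bigr|=\bigl|L\setminus G\cC_p\bigr|, \]
not $|L\setminus\cC_p|$. (The display just before Proposition \ref{pr2} does read $\setminus\cC_x$, but that is a typo in the source, as the derivation above shows; you took it literally.) Your claimed exact value $r(x,\cC_p)=q^2+1$ is in fact never attained here: any $z\in\cC_p$ has $z\notin L$ (since $L\cap\cC=\{p\}$), so $T_z\cU$ meets $L$ in exactly one point --- if $L\subseteq T_z\cU$ then $L\subseteq Gz$, which forces $z\in L$ because $q+1$ lines through $z$ distinct from $L$ cover at most $q+1<q^2+1$ points of $L$ --- and that point lies in $L\cap G\cC_p$. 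Hence $L\cap G\cC_p\neq\emptyset$ and actually $r(x,\cC_p)\leq q^2$.

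The damage is local and repairable in one line: the corrected identity gives $r(x,\cC_p)=|L\setminus G\cC_p|\leq|L|=q^2+1<r(p,\cC_p)$, which is the proposition, and this upper bound $r(x,\cC_p)\leq q^2+1$ is precisely how the paper's own proof concludes. (The paper additionally uses the hypothesis $r(p,\cC_p)>q^2+1$ to argue that $\Gamma_p$ is not contained in a line, so that $\cC_p\cup\{x\}$ is never complete --- material needed for the backtracking context but not for the stated inequality, and correctly absent from your write-up.) So: right approach, same as the paper's, but the asserted equality is false and must be weakened to an inequality, which still yields the result since the hypothesis on $r(p,\cC_p)$ is strict.
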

\begin{proof}
  Since $r(p,\cC_p)>q^2+1$, not all the points of $\Gamma_p$ lie on a
  line.
  On the other hand, for any $x\in\Gamma_p$,
  \[ Gp\cap Gx=px \]
  Let now $\cC'=\cC_p\cup\{x\}$.
  From the first remark above, there is 
  $y\in Gp\setminus Gx$ such that
  \[ y\not\in G(\cC')=G\cC_p\cup px. \]
  Since $\cC$ is complete, 
  \[ Gx\setminus G\cC_p= Gp\cap Gx=px. \]
  From this, the result follows and
  \[ r(x,\cC'_x)=r(x,\cC_p)\leq q^2+1.\]
\end{proof}
\begin{Pro}
  \label{obsr}
  Let $\cC$ be a complete cap of cardinality $q^2+1$. Then, there is
  $p\in\cC$ such that $r(p,\cC_p)>q^2+1$.
\end{Pro}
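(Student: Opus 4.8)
The plan is to bound the total relevance $\sum_{p\in\cC}r(p,\cC_p)$ from below and then apply an averaging argument over the $q^2+1$ points of $\cC$; this is essentially the content of Proposition \ref{abwhc}, recast in terms of coverage.

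First I would reinterpret the relevance. By definition $r(p,\cC_p)=|Gp\setminus G\cC_p|$ counts the points of $\cU$ lying on $Gp$ but on no $Gz$ with $z\in\cC_p$; these are exactly the points $y$ with $c(y,\cC)=1$ whose unique covering point is $p$. Since a point with $c(y,\cC)=1$ is uniquely covered, it is counted in precisely one summand, so
\[ \sum_{p\in\cC}r(p,\cC_p)=n_1, \qquad n_1:=|\{y\in\cU:c(y,\cC)=1\}|. \]

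Next I would estimate $n_1$ by double counting. Summing $|Gp|=q^3+q^2+1$ over $p\in\cC$ gives
\[ \sum_{y\in\cU}c(y,\cC)=(q^2+1)(q^3+q^2+1). \]
Because $\cC$ is complete, every point of $\cU$ is covered, i.e.\ $c(y,\cC)\ge1$; using $|\cU|=(q^3+1)(q^2+1)$ the total excess is
\[ \sum_{y\in\cU}\bigl(c(y,\cC)-1\bigr)=(q^2+1)(q^3+q^2+1)-(q^3+1)(q^2+1)=q^2(q^2+1). \]
Every point with $c(y,\cC)\ge2$ contributes at least $1$ to this excess, so at most $q^2(q^2+1)$ points are multiply covered and hence
\[ n_1\ge|\cU|-q^2(q^2+1)=(q^2+1)(q^3-q^2+1). \]

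Finally, since $\sum_{p\in\cC}r(p,\cC_p)=n_1$ and $|\cC|=q^2+1$, at least one $p\in\cC$ satisfies
\[ r(p,\cC_p)\ge\frac{n_1}{q^2+1}\ge q^3-q^2+1. \]
As $q^3-q^2+1-(q^2+1)=q^2(q-2)$, this point has $r(p,\cC_p)>q^2+1$ whenever $q\ge3$. The same bound also follows directly from Proposition \ref{abwhc}: the weight estimate $w(p,\cC)\ge q^3+1$ combined with the relevance--weight inequality preceding it yields $r(p,\cC_p)\ge q^3-q^2+1$ as well. The main obstacle is strictness rather than the estimate itself: the averaging only delivers $r(p,\cC_p)\ge q^3-q^2+1$, which degenerates to equality with $q^2+1$ exactly when $q=2$, so that single small case would have to be excluded by hand or settled by direct inspection of the complete $5$-caps of $\cU\subset\PG(3,4)$.
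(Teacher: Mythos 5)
Your argument is correct for $q\ge 3$ and takes a genuinely different route from the paper. The paper's own proof is a two-line contradiction: assuming the maximal relevance is less than $q^2+1$, it asserts $(q^3+1)(q^2+1)=|\cU|\leq (q^2+1)r^{+}(\cC)\leq (q^2+1)^2$. As written this is doubly problematic: $r^{+}(\cC)$ is defined as a maximum over points \emph{not} covered by $\cC$, a set which is empty when $\cC$ is complete (so it must be read as $\max_{p\in\cC}r(p,\cC_p)$), and, more seriously, the inequality $|\cU|\leq(q^2+1)\max_{p\in\cC} r(p,\cC_p)$ can only come from $\sum_{p\in\cC}r(p,\cC_p)\geq|\cU|$ --- but your identity $\sum_{p\in\cC}r(p,\cC_p)=n_1$ shows that this sum counts only the \emph{simply covered} points, and since the total coverage $(q^2+1)(q^3+q^2+1)$ strictly exceeds $|\cU|=(q^2+1)(q^3+1)$, multiply covered points must exist and $n_1<|\cU|$. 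So your excess count is the honest version of the paper's count: it repairs the unjustified step, and it buys a stronger quantitative conclusion, namely $r(p,\cC_p)\geq q^3-q^2+1$ for some $p$, rather than the bare $r(p,\cC_p)>q^2+1$. Your alternative derivation via Proposition \ref{abwhc} together with the inequality $r(x,\cC_x)\geq 2w(x,\cC)-(q^3+q^2+1)$ is also correct and, tellingly, lands on the same threshold $q^3-q^2+1$.

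The one residual issue is the case $q=2$, which you flag accurately: there $q^3-q^2+1=q^2+1=5$ and the averaging degenerates to a non-strict inequality, so it would have to be settled by direct inspection of complete $5$-caps of the Hermitian surface in $\PG(3,4)$ (where equality would force exactly $25$ simply covered and $20$ doubly covered points). It is worth noting that this is not a defect of your method relative to the paper's: the paper's one-liner does not dispose of $q=2$ either, since any correct rendering of its counting hits the same boundary. In short, your proof is sounder than the paper's own argument in the generic case, and the $q=2$ case is genuinely open on both sides.
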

\begin{proof}
  Suppose that $r^{+}(\cC)<q^2+1$.
  Then,
  \[ (q^3+1)(q^2+1)=|\cU|\leq (q^2+1)r^{+}(\cC)\leq (q^2+1)^2, \]
  a contradiction.
\end{proof}
A simple backtracking approach is presented in Algorithm \ref{alg4}.
Proposition \ref{obsr} guarantees that, given any cap $\cC$, 
a point is determined after at most $|\cC|-q^2-1$ recursive calls.

\begalg
\label{alg4}
\begin{tabular}{ll}
  {\bf Input:} & a cap $\cC$, a cap $\cC'$ with $\cC\subseteq\cC'$; \\
  {\bf Output:}& a cap $\cC''$ with $\cC\subseteq\cC''$.
\end{tabular}
\vskip.2cm
Large\_Cap($\cC$,$\cC'$):=
\begin{enumerate}
\item if $\cC'=\cC''$, then exit;
\item compute 
  $M=\max_{t\in \cC'\setminus\cC} r(t,\cC')$;
\item select $x\in\cC'\setminus\cC$ such that
  $r(p,\cC')=M$;
\item $\cC''\leftarrow\cC'\setminus\{p\}$;
\item if $\exists x\not\in G\cC''$ such that
  $r(x,\cC''\cup\{x\})<M$, then 
  \begin{enumerate}
  \item $\cC''\leftarrow\cC''\cup\{x\}$;
  \item return $\cC''$;
  \end{enumerate}
  else
  \begin{enumerate}
  \item $\cC''\leftarrow \text{Large\_Cap}(\cC,\cC'')$;
  \end{enumerate}
\item let $x\not\in G\cC''$ such that
  \[ w(x,\cC''\cup\{x\})=\min_{y\not\in G\cC''}w(y,\cC''\cup\{y\}); \]
\item $\cC''\leftarrow \cC''\cup\{x\}$.
\end{enumerate}
\caption{Backtracking: large caps}
\enalg 

\section{Results of Algorithm \ref{alg1}}
Algorithm \ref{alg1}, as presented in this paper, has been implemented
with the
computer algebra package GAP \cite{GAP} and some tests have
been performed for small values of $q$, namely $q=5$ and $q=7$.
The methodology followed has usually
been to iterate each test at least $1000$  times and then consider
an average of the results.
The numbers in all the tables of this section represent the {\em chance}
 of obtaining a cap of given size using the algorithm, when a set of
 prescribed cardinality
 is provided as input.

\subsection{Random search}
\label{sec:31}
Algorithm \ref{alg1}, with the selection of points done at random,
may be used in order to investigate the {\em spectrum} of
complete caps of the Hermitian surface.
The results of a test performed with the empty set as initial datum
are presented in Table \ref{a1q5t1} for $q=5$ and in Table \ref{a1q7t1}
for $q=7$.
\begin{table}[ht]
  \[ \begin{array}{r|D{.}{.}{1}}
    \hline
    |\ccC| & \multicolumn{1}{c}{\%}   \\
    \hline
    78  & 0.1  \\
    79  & 1.0  \\
    80  & 1.9  \\
    81  & 5.9  \\
    82  & 9.3  \\
    83  & 16.3  \\
    84  & 19.7  \\
    \hline\end{array} \qquad\qquad
  \begin{array}{r|D{.}{.}{1}}
    \hline
    |\ccC| & \multicolumn{1}{c}{\%}   \\
    \hline
    85  & 16.5  \\
    86  & 12.6  \\
    87  & 9.5 \\
    88  & 4.7 \\
    89  & 1.6 \\
    90  & 0.8  \\
    91  & 0.1   \\
    \hline
  \end{array} \]
  \caption{Distribution of caps:
    results of Algorithm \ref{alg1} with $q=5$ and $\cC=\emptyset$}
  \label{a1q5t1}
\end{table}
\begin{table}[ht]
  \[ \begin{array}{r|D{.}{.}{1}}
    \hline
    |\ccC| & \multicolumn{1}{c}{\%}   \\
    \hline
    195 & 0.3 \\
    196 & 0.6 \\
    197 & 1.4 \\
    198 & 2.0 \\
    199 & 4.0 \\
    200 & 5.8 \\
    201 & 8.7 \\
    202 & 10.3 \\
    \hline\end{array} \qquad\qquad
  \begin{array}{r|D{.}{.}{1}}
    \hline
    |\ccC| & \multicolumn{1}{c}{\%}   \\
    \hline
    203 & 10.3 \\
    204 & 11.1 \\
    205 & 12.8 \\
    206 & 9.0 \\
    207 & 8.1  \\
    208 & 7.7  \\
    209 & 4.6  \\
    210 & 1.9  \\
    \hline\end{array} \qquad\qquad
  \begin{array}{r|D{.}{.}{1}}
    \hline
    |\ccC| & \multicolumn{1}{c}{\%}   \\
    \hline
    211 & 0.1  \\
    212 & 1.0  \\
    213 & 0.3  \\
    \hline
  \end{array} \]
  \caption{Distribution of caps:
    results of Algorithm \ref{alg1} with $q=7$ and $\cC=\emptyset$}
  \label{a1q7t1}
\end{table}

The same algorithm, for $q=5$, when the input $\cC$ has been
a set  of $50$ random points contained
in an ovoid, has produced at least one large complete cap, but no
ovoid, as it can be seen in Table \ref{a1q5t2}. 
The results for $q=7$ with an input set $\cC$ of size $98$ have
been similar, see Table \ref{a1q7t2}.
\begin{table}[ht]
  \[ \begin{array}{r|D{.}{.}{1}}
    \hline
    |\ccC| & \multicolumn{1}{c}{\%}   \\
    \hline
    81  & 0.1  \\
    82  & 0.2  \\
    83  & 0.7 \\
    84  & 1.0  \\
    85 & 2.4 \\
    86  & 4.8 \\ 
    87  & 7.9  \\
    88 & 12.7  \\
    89 & 10.3  \\
    90 & 12.4 \\
    91 & 10.9 \\ \hline
  \end{array}\qquad\qquad
  \begin{array}{r|D{.}{.}{1}}
    \hline
    |\ccC| & \multicolumn{1}{c}{\%}   \\
    \hline
    92 & 10.3 \\
    93  & 6.5  \\
    94 & 5.9  \\
    95 & 4.3   \\
    96 & 2.8   \\
    97 & 3.0  \\
    98 & 1.4 \\
    99 & 1.0 \\
    100 & 0.4 \\
    101 & 0.4 \\
    102 & 0.2 \\ \hline
  \end{array}\qquad\qquad
  \begin{array}{r|D{.}{.}{1}}
    \hline
    |\ccC| & \multicolumn{1}{c}{\%}   \\
    \hline
    103 & 0.2 \\
    104 & 0.1 \\
    106 & 0.1  \\
    112 & 0.1 \\
    \hline
  \end{array} \]
  \caption{Distribution of caps:
    results of Algorithm \ref{alg1} with $q=5$ and $|\cC|=50$}
  \label{a1q5t2}
\end{table}
\begin{table}[ht]
  \[ \begin{array}{r|D{.}{.}{1}}
    \hline
    |\ccC| & \multicolumn{1}{c}{\%}   \\
    \hline
    198 & 0.2 \\
    199 & 0.5 \\
    200 & 0.1 \\
    201 & 1.9 \\
    202 & 2.8 \\
    203 & 4.0 \\
    204 & 5.0 \\
    205 & 8.6 \\ 
    206 & 9.2 \\
    207 & 11.2 \\
    208 & 13.1 \\
    \hline
  \end{array}\qquad\qquad
  \begin{array}{r|D{.}{.}{1}}
    \hline
    |\ccC| & \multicolumn{1}{c}{\%}   \\
    \hline
    209 & 11.1 \\
    210 & 9.3 \\
    211 & 7.3 \\ 
    212 & 7.0 \\
    213 & 3.6 \\ 
    214 & 2.5 \\
    215 & 1.0 \\
    216 & 0.8 \\
    217 & 0.5 \\
    218 & 0.2 \\
    219 & 0.1 \\
    \hline
  \end{array}
  \]
  \caption{Distribution of caps:
    results of Algorithm \ref{alg1} with $q=7$ and $|\cC|=98$}
  \label{a1q7t2}
\end{table}
Clearly, as it had to be expected, ovoids represent only a tiny fraction
of possible complete caps and it is very difficult for them to occur
if the initial cap has size much smaller
than $q^3-q^2$.
However, 
as the size of the  input set grows, the chances for a `random'
completion of the cap to be an ovoid
increase as well: this  can be seen in Table
\ref{a1q5t3}, where
the results of an experiment realised with $|\cC|=69$ and $q=5$ are presented.
\begin{table}[ht]
  \[ \begin{array}{r|D{.}{.}{1}}
    \hline
    |\ccC| & \multicolumn{1}{c}{\%}   \\
    \hline
    100  & 0.1  \\
    101 &  0.6 \\
    102  & 0.5   \\
    103  & 0.9  \\
    104  & 1.0  \\
    105  & 1.1  \\
    106 &  1.8  \\
    107 &  1.1  \\
    108 &  3.5  \\
    109 &  3.3  \\
    110 &  4.8  \\
    \hline\end{array} \qquad\qquad
  \begin{array}{r|D{.}{.}{1}}
    \hline
    |\ccC| & \multicolumn{1}{c}{\%}   \\
    \hline
    111  &  4.8  \\
    112  &  6.7 \\
    113  &  6.0 \\
    114  &  3.0  \\
    115  &  2.7  \\
    116  & 14.4  \\
    117  & 9.0 \\
    118  & 1.9 \\
    119  & 8.0 \\
    121  & 22.4  \\
    126  & 9.7   \\
    \hline
  \end{array} \]
  \caption{Distribution of caps:
    results of Algorithm \ref{alg1} with $q=5$ and $|\cC|=69$}
  \label{a1q5t3}
\end{table}
\begin{table}[ht]
  \[ \begin{array}{r|D{.}{.}{1}}
    \hline
    |\ccC| & \multicolumn{1}{c}{\%}   \\
    \hline
    291 & 0.1 \\
    293 & 0.1 \\
    296 & 0.1 \\
    299 & 0.1 \\
    300 & 0.2 \\
    301 & 0.3 \\
    302 & 0.1 \\
    303 & 0.2 \\
    304 & 0.3 \\
    305 & 0.3 \\
    306 & 0.4 \\
    307 & 0.7 \\
    308 & 1.0 \\
    \hline\end{array} \qquad
  \begin{array}{r|D{.}{.}{1}}
    \hline
    |\ccC| & \multicolumn{1}{c}{\%}   \\
    \hline
    309 & 0.5 \\
    310 & 0.9 \\
    311 & 1.2 \\
    312 & 1.4 \\
    313 & 1.7 \\
    314 & 2.0 \\
    315 & 0.9 \\
    316 & 1.0 \\
    317 & 2.3 \\
    318 & 4.4 \\
    319 & 3.7 \\
    320 & 1.6 \\
    321 & 0.8 \\
    \hline
  \end{array}\qquad
  \begin{array}{r|D{.}{.}{1}}
    \hline
    |\ccC| & \multicolumn{1}{c}{\%}   \\
    \hline
    322 & 0.3 \\
    323 & 5.6 \\
    324 & 8.5 \\
    325 & 5.3 \\
    326 & 2.2 \\
    327 & 1.2 \\
    328 & 0.3 \\
    329 & 0.1 \\
    330 & 15.3 \\
    331 & 6.4 \\
    332 & 0.9 \\
    333 & 0.2 \\
    335 & 0.2 \\ 
   \hline
  \end{array}\qquad
  \begin{array}{r|D{.}{.}{1}}
    \hline
    |\ccC| & \multicolumn{1}{c}{\%}   \\
    \hline
    337 & 19.4 \\
    344 & 7.6 \\
    \hline
  \end{array} \]
  \caption{Distribution of caps:
    results of Algorithm \ref{alg1} with $q=7$ and $|\cC|=190$}
  \label{a1q7t3}
\end{table}
  \begin{table}[ht]
    \[ \begin{array}{r|D{.}{.}{1}}
      \hline
      |\ccC| & \multicolumn{1}{c}{\%}   \\
      \hline
      331 & 2.0 \\
      337 & 17.0 \\
      344 & 81.0 \\
      \hline\end{array} \qquad\qquad
    \]
    \caption{Distribution of caps:
      results of $100$ runs of Algorithm \ref{alg1} with $q=7$ and $|\cC|=237$}
    \label{a1q7t4}
  \end{table}
  \begin{table}[ht]
 \[ \begin{array}{r|D{.}{.}{1}}
      \hline
      |\ccC| & \multicolumn{1}{c}{\%}   \\
      \hline
      705 & 1.0 \\
      712 & 3.0 \\
      713 & 4.0 \\
      720 & 1.0 \\
      721 & 24.0 \\
      730 & 67.0 \\
      \hline\end{array} \qquad\qquad
    \]
    \caption{Distribution of caps:
      results of $100$ runs of Algorithm \ref{alg1} with $q=9$ and $|\cC|=450$}
    \label{a1q9t4}
    \end{table}
  Observe that no caps with size $121<|\cC|<126$ have been found. 
  The same computations for $q=7$ and $|\cC|=190$ provide the results
  of Table \ref{a1q7t3}.
  The same tests, when performed for $q=7$ and $|\cC|=237$ and for
  $q=9$ and $|\cC|=450$ have
 produced the results of Tables \ref{a1q7t4} and
 \ref{a1q9t4} --- as it can be seen,
 in this cases most of the complete caps constructed have been
 ovoids.
\par
  However, no complete cap $\cC$ with
  cardinality 
  \[ q^3-q+1<|\cC|<q^3+1 \]
  has been found.
  This suggests the following conjecture.
  \begin{Conj}
    The size of the second largest complete cap of the Hermitian
    surface is $q^3-q+1$.
  \end{Conj}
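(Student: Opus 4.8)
The plan is to split the statement into two independent tasks: a \emph{lower bound}, exhibiting a complete cap of size exactly $q^3-q+1$, and an \emph{upper bound}, showing that no complete cap $\cC$ satisfies $q^3-q+1<|\cC|<q^3+1$. Throughout I would work with the deficiency $m:=q^3+1-|\cC|$ and with the set $E$ of \emph{external} generators, that is, the generators meeting $\cC$ in no point. Since $\cC$ is a cap, exactly $|\cC|(q+1)$ generators meet $\cC$, whence $|E|=(q+1)m$; in these terms the conjecture asserts that completeness forces $m=0$ or $m\ge q$, with the value $m=q$ attained.

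For the lower bound I would give an explicit construction of a maximal cap of size $q^3-q+1$. Observe first that, by the consequence of Proposition~\ref{ov2le} recorded above, any complete cap of size larger than $q^3-q$ cannot be contained in an ovoid; hence the required cap must be built directly rather than carved out of a known ovoid. I would therefore look for a point set invariant under a suitable subgroup of the unitary group and verify completeness via Propositions~\ref{pr2} and~\ref{pro2}: it suffices to check that every point of $\cU\setminus G\cC$ has relevance at least $2$, equivalently that no generator is external beyond those forced by the construction. The experimental data of Section~\ref{sec:31} (the value $q^3-q+1$ occurring for $q=5,7,9$) make such a construction plausible for all $q$, and here the main labour is algebraic bookkeeping rather than a conceptual obstacle.

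The upper bound is the crux. Assume $1\le m\le q-1$ and seek a contradiction. The one genuinely rigid piece of local structure is the following: if $L\in E$ then each of its $q^2+1$ points is a hole and must be covered, so through every such point passes at least one secant generator; two distinct secants through two points of $L$ would contain $L$ and hence coincide with it, so these secants are pairwise distinct, and thus \emph{every external generator meets at least $q^2+1$ secant generators}. I would combine this with the global counts
\[
\sum_{y\in\cU\setminus\cC}c(y,\cC)=q^2(q+1)|\cC|,
\qquad
\sum_{y\in\cU\setminus\cC}\bigl(q+1-c(y,\cC)\bigr)=(q^2+1)(q+1)m ,
\]
together with the completeness bound $1\le c(y,\cC)\le q+1$ at every hole $y$, to constrain the distribution of coverage values and to limit how the $(q+1)m$ external generators may be packed while still covering every hole.

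I expect the purely combinatorial counting to be insufficient on its own, and this is where the main obstacle lies. The decisive input should be spectral: the collinearity graph of $\cU$ is strongly regular, caps are cocliques, and the Hoffman ratio bound yields the ovoid value $q^3+1$ with equality characterising ovoids. I would attempt to sharpen this into an \emph{interval} statement, proving that the next admissible size of a maximal coclique not extending to an ovoid drops all the way to $q^3-q+1$, the gap of $q$ being precisely the deficiency produced by the bijection of the previous paragraph. Making this rigorous—equivalently, ruling out every configuration of $(q+1)m$ external generators for $1\le m\le q-1$—is the hard part, and is the reason the statement is posed here as a conjecture rather than a theorem.
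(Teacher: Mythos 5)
You should first note what the paper itself does with this statement: it is posed as a \emph{Conjecture}, supported only by the computational evidence of Section~\ref{sec:31} --- in the experiments for $q=5,7,9$ complete caps of size exactly $q^3-q+1$ did occur (Tables \ref{a1q5t3}, \ref{a1q7t3}, \ref{a1q7t4}, \ref{a1q9t4}), while no complete cap with $q^3-q+1<|\cC|<q^3+1$ was ever found. The paper contains no proof, so your proposal cannot be measured against one; it can only be measured on its own terms, and on those terms the preparatory material you do supply is sound. The count $|E|=(q+1)m$ is correct, since $\cU$ carries $(q^3+1)(q+1)$ generators and a cap of size $|\cC|$ meets exactly $|\cC|(q+1)$ of them; the observation that the $q^2+1$ covering generators along an external generator are pairwise distinct is correct; both displayed identities are correct (the first because no two points of a cap are collinear, so all $q^2(q+1)$ points collinear with a given cap point are holes; the second because both sides count incidences between holes and external generators); and your use of the corollary to Proposition~\ref{ov2le}, to conclude that a complete cap of size exceeding $q^3-q$ and smaller than $q^3+1$ cannot lie inside an ovoid, is a legitimate deduction from the paper.

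The genuine gap is that neither half of your plan is carried out, and nothing in the outline indicates how it could be. For the lower bound you offer no construction valid for all $q$: the only attainment evidence is the experimental data above, and the paper's remark on derivation does not help, since two derived ovoids differ in $q+1$ points and so yield a common cap of size $q^3-q$ that is \emph{not} complete --- the required complete cap of size $q^3-q+1$ must come from elsewhere, and ``algebraic bookkeeping'' is a hope, not an argument. For the upper bound, the counting identities you list cannot suffice by themselves: the average coverage of a hole is close to $q+1$, so for every $m$ with $1\le m\le q-1$ the two sums admit coverage distributions with $c(y,\cC)\ge 1$ at every hole, and no contradiction is reachable at this level --- which you concede. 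The proposed spectral sharpening is the speculative core: the Hoffman ratio bound on the strongly regular collinearity graph characterises equality (ovoids) but is structurally silent about the size of the second largest \emph{maximal} coclique, and no mechanism is indicated for converting the eigenvalue data into the interval statement $m\in\{0\}\cup[q,\infty)$. In short, your proposal is a correct and useful reformulation (deficiency $m$, external generators, the forced $q^2+1$ covering generators), consistent with everything the paper reports, but it proves neither existence at $q^3-q+1$ nor non-existence in the gap; the statement remains exactly as open after your outline as the paper leaves it.
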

  \subsection{Biased search}
  \label{sec:32}
  In this subsection we consider complete caps obtained by
  using a variant of Algorithm \ref{alg1}, in which the
  point to be added to the partial cap $\cC$ at each iteration is
  required to have minimal relevance. 
  The initial input, as before, is a partial cap $\cC$ provided by
  a random subset of given size of an ovoid.
  This version of the algorithm has shown an interesting behaviour:
  when the initial {\em datum} is large enough, say $|\cC|>34$ for $q=5$,
  the  the result turns out to be usually, but not always, an ovoid ---
  this proves that this 
  procedure is a definite improvement over the purely random search,
  where, in order to have a reasonable chance of finding ovoids,
  at least $60$ points had to be prescribed. 
  \par
  In order to be able to compare these results with those of the previous
  subsection, we have run the algorithm with $100$ different random 
  subsets of size $69$ as input: the algorithm has, in all these
  cases, constructed an ovoid.
  As a matter of fact, an ovoid has been found even with
  an input {\em datum} as small as a set of only $10$ points. 
  However, we have also verified that there exist caps 
  of size at least $34$ for which this program
  produces completions of size $98$.
 \par
 The results for $q=7$ and $|\cC|\geq 90$ have been similar.
\par
  A future development of this work will be deeper investigation
  of these issues and their  relationship with the structure
  of the original ovoid $\cO$ as described by its group of automorphisms.

\end{document}